\numberwithin{equation}{section}
\numberwithin{figure}{section}
\theoremstyle{plain}
\newtheorem{thm}{\protect\theoremname}
\theoremstyle{plain}
\newtheorem{lem}[thm]{\protect\lemmaname}
\theoremstyle{remark}
\newtheorem{rem}[thm]{\protect\remarkname}
\theoremstyle{definition}
\newtheorem{problem}[thm]{\protect\problemname}
\global\long\def\Re{\operatorname{Re}}
\global\long\def\Im{\operatorname{Im}}
\global\long\def\Arg{\operatorname{Arg}}
\providecommand{\lemmaname}{Lemma}
\providecommand{\problemname}{Problem}
\providecommand{\remarkname}{Remark}
\providecommand{\theoremname}{Theorem}
\begin{document}

\title{Zeros of polynomials with four-term recurrence and linear coefficients}

\author{Khang Tran \and Andres Zumba\\
California State University, Fresno}
\begin{abstract}
This paper investigates the zero distribution of a sequence of polynomials
$\left\{ P_{m}(z)\right\} _{m=0}^{\infty}$ generated by the reciprocal
of $1+ct+B(z)t^{2}+A(z)t^{3}$ where $c\in\mathbb{R}$ and $A(z)$,
$B(z)$ are real linear polynomials. We study necessary and sufficient
conditions for the reality of the zeros of $P_{m}(z)$. Under these
conditions, we find an explicit interval containing these zeros, whose
union forms a dense subset of this interval. 
\end{abstract}

\maketitle

\section{Introduction}

The understanding of zeros of polynomials defined recursively plays
an important part in the study of zero distribution of polynomials.
A classic recurrence is the three-term recursive formula since it
is a necessary condition for a sequence of polynomials to be orthogonal.
Orthogonality, in turn, establishes the reality of the zeros of the
sequence of polynomials. 

Much less is known about the zero distribution of a sequence of polynomials
$\left\{ P_{m}(z)\right\} _{m=0}^{\infty}$ satisfying a four-term
recurrence 
\begin{equation}
P_{m}(z)+C(z)P_{m-1}(z)+B(z)P_{m-2}(z)+A(z)P_{m-3}(z)=0,\qquad A(z),B(z),C(z)\in\mathbb{\mathbb{R}}[z],\label{eq:fourtermrecurrence}
\end{equation}
for $m\ge1$, with initial conditions $P_{0}(z)\equiv1$ and $P_{-m}(z)\equiv0$.
Equivalently, this sequence of polynomials is generated by 
\begin{equation}
\sum_{m=0}^{\infty}P_{m}(z)t^{m}=\frac{1}{1+C(z)t+B(z)t^{2}+A(z)t^{3}}.\label{eq:generalgen}
\end{equation}
For the zero distribution of a special four-term recurrence, see \cite{ghr}.
Due to its four-term recurrence form, this sequence may not be orthogonal
and the reality of the zeros of $P_{m}(z)$ for certain coefficient
polynomials $A(z)$, $B(z)$, and $C(z)$ is not immediate. Even when
$A(z)$, $B(z)$, and $C(z)$ are linear polynomials in $z$, the
conditions for the reality of these zeros are still unknown. In \cite[Theorem 1]{orr},
the authors found a sufficient condition for the reality of the zeros
of $P_{m}(z)$ when $C(z)=z$ and $A(z)$ and $B(z)$ are constant.
This condition was later shown to be necessary in \cite[Proposition 1]{bbs}.
For the case when $A(z)$ is linear and $B(z)$ and $C(z)$ are constant,
such a condition was established in \cite{tz}.

It is natural to consider the reality of the zeros of $P_{m}(z)$
when only one of the coefficient polynomials $A(z)$, $B(z)$, and
$C(z)$ is constant. The goal of this paper is to find necessary and
sufficient conditions for the reality of the zeros of $P_{m}(z)$
when $C(z)\equiv c$, $B(z)=b_{0}+b_{1}z$, and $A(z)=a_{0}+a_{1}z$
where $c,b_{0},b_{1},a_{0},a_{1}\in\mathbb{R}$, $ca_{1}b_{1}\le0$,
and $ca_{1}\ne0$. The case $ca_{1}b_{1}>0$ remains unknown to the
authors. 
\begin{thm}
\label{thm:generalthm}Suppose the sequence $\left\{ P_{m}(z)\right\} $
is defined as above where $ca_{1}b_{1}\le0$ and $ca_{1}\ne0$. The
zeros of $P_{m}(z)$ are real for all $m\in\mathbb{N}$ if and only
if 
\begin{equation}
1+a+b\ge0\qquad\text{and}\qquad9-27a+b\ge0\label{eq:cond}
\end{equation}
where 
\begin{align*}
a & :=\frac{b_{0}}{c^{2}}-\frac{b_{1}a_{0}}{c^{2}a_{1}},\\
b & :=-\frac{b_{1}c}{a_{1}}.
\end{align*}
\end{thm}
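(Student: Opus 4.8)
The plan is to first remove three of the five parameters using the scaling symmetries that preserve reality of the zeros, reducing the denominator to a two-parameter normal form, and then to read off the conditions from the root structure of the resulting cubic in $t$.

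\textbf{Reduction.} Since $ca_{1}\ne0$, I would replace $t$ by $t/c$ (which only rescales each $P_{m}$, hence leaves its zeros unchanged) and introduce the real affine change of variable $u:=\frac{a_{1}}{c^{3}}\bigl(z+\frac{a_{0}}{a_{1}}\bigr)$. A direct computation turns $1+ct+B(z)t^{2}+A(z)t^{3}$ into
\begin{equation}
D(t,u)=1+t+at^{2}+u\,t^{2}(t-b)=(1+t+at^{2})+u\,t^{2}(t-b),\label{eq:normal}
\end{equation}
with $a,b$ exactly as in the statement. Because $z\mapsto u$ is a real affine bijection, the zeros of $P_{m}$ are real for all $m$ if and only if the zeros in $u$ of the coefficients $Q_{m}(u)$ of $1/D(t,u)=\sum_{m}Q_{m}(u)t^{m}$ are real for all $m$. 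Note that $ca_{1}b_{1}\le0$ is equivalent to $b\ge0$, which I will use to place the roots of the factor $t^{2}(t-b)$ on the nonnegative axis.

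\textbf{Governing principle.} Since $D$ is linear in $u$, solving $D(t,u)=0$ gives the single-valued parametrization $u=R(t):=-\frac{1+t+at^{2}}{t^{2}(t-b)}$. By the standard partial-fraction correspondence in the three $t$-roots of $D$, one has $Q_{m}(u)=-\sum_{j}D_{t}(t_{j},u)^{-1}t_{j}(u)^{-m-1}$, so $Q_{m}$ can vanish at a given $u$ only when the two roots of smallest modulus balance. For real $u$ the cubic has either three real roots or one real root together with a complex-conjugate pair; the oscillation that produces a full set of real zeros comes precisely from a \emph{dominant} conjugate pair $t=\rho e^{\pm i\theta}$, for which $Q_{m}(u)=\rho^{-m-1}\bigl(\text{a trigonometric polynomial in }(m+1)\theta\bigr)$. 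Thus I expect the real zeros of $Q_{m}$ to fill the image under $R$ of the arc on which $D$ carries a dominant conjugate pair, and reality for all $m$ to be equivalent to that image lying in $\mathbb{R}$, with the correct zero count.

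\textbf{Where the two conditions come from.} The endpoints of the admissible $u$-interval occur where the conjugate pair degenerates to a real double root, i.e. along the locus $D(t,u)=D_{t}(t,u)=0$. Forming $3D-tD_{t}$ gives $(a-bu)t^{2}+2t+3=0$, and combining this with $D_{t}=0$ yields $u=(t+2)/t^{3}$ together with $a=\dfrac{-2t^{2}+(b-3)t+2b}{t^{3}}$. Evaluating this last identity at the two real double-root positions $t=-1$ and $t=-3$ produces exactly $a=-(1+b)$ and $a=(9+b)/27$, that is, the two boundaries $1+a+b=0$ and $9-27a+b=0$. I would then argue that, with $b\ge0$, the configuration ``dominant conjugate pair on the whole relevant arc'' persists precisely when $1+a+b\ge0$ and $9-27a+b\ge0$, the two inequalities ruling out the two ways in which a real double root at $t=-1$, respectively $t=-3$, can destroy the dominance of the conjugate pair.

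\textbf{Main obstacle.} The genuine difficulty is twofold. First, I must upgrade the asymptotic balance to an exact statement valid for every $m$: on the admissible interval, writing $Q_{m}(u)=\rho(u)^{-m-1}\sin\bigl((m+1)\theta(u)+\varphi(u)\bigr)/(\cdots)$ and using monotonicity of $\theta(u)$ to count exactly $m$ sign changes, so that none of the $m$ zeros can escape into $\mathbb{C}$. Second, the necessity direction requires showing that if either inequality fails, the offending real double root forces either a loss of dominance of the conjugate pair or a genuinely non-real value of $R$ on the accumulation set, producing a non-real zero of some $P_{m}$. Controlling the modulus ordering of the three roots uniformly in $u$, and proving that $t=-1$ and $t=-3$ are the only binding constraints, is the technical heart of the argument.
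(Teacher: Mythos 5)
Your reduction to $D(t,u)=1+t+at^{2}+ut^{2}(t-b)$ with $b\ge0$ is exactly the paper's first step, and your derivation of the boundary values from the double-root locus is correct and genuinely illuminating: eliminating via $3D-tD_{t}=(a-bu)t^{2}+2t+3$ does yield $u=(t+2)/t^{3}$ and $a=(-2t^{2}+(b-3)t+2b)/t^{3}$, whose values at $t=-1$ and $t=-3$ reproduce $1+a+b=0$ and $9-27a+b=0$; the paper never isolates this computation, though the same factors surface in its sign and discriminant formulas (e.g.\ $g(1)=-(9-27a+b)(1+a+b)$ in Lemma \ref{lem:oppsigns}). Beyond this, however, your text is a program rather than a proof, and two of its steps are misstated in ways that matter. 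On sufficiency, the oscillating quantity is \emph{not} of the form $\rho^{-m-1}\sin\bigl((m+1)\theta+\varphi\bigr)$ up to a factor: the real third root contributes an additive term, and the correct identity (the paper's \eqref{eq:gthetaform}) is $\frac{(\zeta-\cos\theta)\sin((m+1)\theta)}{\sin\theta}-\cos((m+1)\theta)+\zeta^{-m-1}$, where $\zeta(\theta)$ is defined only implicitly as a root of the cubic $f^{*}(\zeta,\theta)$ and, for $a>1/4$, has a vertical asymptote at $\cos^{-1}(-1/(2\sqrt{a}))$. That asymptote is not a nuisance to be smoothed over: the subinterval containing it must be shown to contribute an extra sign change (Lemma \ref{lem:singinterval}), or the count falls one short. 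Relatedly, your target of ``exactly $m$ sign changes'' is wrong: $\deg H_{m}\le\lfloor m/2\rfloor$ (Lemma \ref{lem:degreeHm}), the usable arc is only $\theta\in(\pi/2,\pi)$, and the achievable count there is $\lfloor m/2\rfloor$, which suffices precisely because of the degree bound. Finally, the analytic core you defer to a single clause --- strict monotonicity of $u$ along the arc, without which distinct zeros of the trigonometric equation need not give distinct zeros of $H_{m}$ --- occupies all of Section \ref{sec:zthetamonotone} of the paper, with genuinely different estimates in the cases $a<0$, $0\le a\le1/4$, and $a>1/4$.

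On necessity, observing that real double roots occur at the boundary parameters does not by itself produce a non-real zero of some $H_{m}$. What is required is a non-real $z$ at which the two smallest-modulus roots of $D(t,z)$ are equimodular, so that $z\in\limsup\mathcal{Z}(H_{m})$ by Sokal's theorem (\cite[Theorem 1.5]{sokal}); the paper constructs such $z$ explicitly --- for $a<-b-1$ by perturbing from the roots $0,\pm i\sqrt{-(1+b)/a}$ of $f^{*}(\zeta,\pi/2)$, and for $a>(b+9)/27$ via a discriminant analysis of $\Delta(x)$ culminating in the estimate $|\zeta'|<1$ for the multiple root \eqref{eq:zetazero}, which is several pages of inequalities. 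Your phrase ``loss of dominance of the conjugate pair'' gestures at the right mechanism but supplies neither the construction nor any modulus estimate, and proving that $t=-1$ and $t=-3$ are ``the only binding constraints'' is exactly the content still to be created. In short: right normal form, right boundary heuristic, and the same overall architecture as the paper, but the monotonicity argument, the asymptote bookkeeping, the corrected zero count, and the entire necessity construction are missing, so as it stands the proposal establishes neither direction of the theorem.
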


Under \eqref{eq:cond}, we can find an explicit real interval containing
the zeros of $P_{m}(z)$ by considering the cubic polynomial 
\[
(8a-2)\zeta^{3}+\zeta^{2}(-12a+b+5)+(6a-2)\zeta-a
\]
whose only real zero on $(-\infty,-1]\cup[1,\infty)$ is denoted by
$\zeta_{0}$. The existence and uniqueness of such zero is justified
in Section \ref{sec:auxiliary-functions}. The zeros of $P_{m}(z)$
lie on the interval
\begin{equation}
\frac{c^{3}}{a_{1}}I_{a,b}-\frac{a_{0}}{a_{1}}\label{eq:intPm}
\end{equation}
where
\begin{equation}
I_{a,b}=\left(-\infty,\frac{\zeta_{0}^{2}}{(1-2\zeta_{0})^{3}}\right].\label{eq:intdef}
\end{equation}
Moreover, if we let $\mathcal{Z}(P_{m})$ be the set of zeros of $P_{m}(z)$,
then $\bigcup_{m=0}^{\infty}\mathcal{Z}(P_{m})$ is dense on \eqref{eq:intPm}.
In the special case $b_{1}=0$, $a_{1}=1$, and $a_{0}=0$, we solve
\[
\zeta_{0}=\frac{2a-1-\sqrt{1-3a}}{4a-1}
\]
and obtain Case (ii) of Theorem 1 in \cite{tz}.

Our approach to the proof of Theorem \ref{thm:generalthm} relies
on the reparametrization from $P_{m}(z)$ to $P_{m}(z(\theta))$ where
$z(\theta)$ is strictly monotone. This function $z(\theta)$ is constructed
by an auxiliary function $\zeta(\theta)$ which is defined implicitly
through the bivariate function $f(\zeta,\theta)$ (c.f. \eqref{eq:zetapoly}).
We count the number of zeros in $\theta$ of $P_{m}(z(\theta))$,
each of which yields a distinct real zero of $P_{m}(z)$ by the monotonicity
of $z(\theta)$. If the number of counted zeros is the same as the
degree of $P_{m}(z)$, then all the zeros of $P_{m}(z)$ are real
by the Fundamental Theorem of Algebra. Our paper is organized as follows.
Section \ref{sec:auxiliary-functions} studies the auxiliary function
$\zeta(\theta)$ and Section \ref{sec:zthetamonotone} establishes
the monotone property of $z(\theta)$. With all the properties in
these two sections, we prove the sufficient and necessary condition
for the reality of the zeros of $P_{m}(z)$ in Sections 4 and 5 respectively.

\section{\label{sec:auxiliary-functions}Auxiliary functions }

Our first step is to simplify the right side of \eqref{eq:generalgen}.
We note that the substitutions $t\rightarrow t/c$ and 
\[
\frac{a_{1}}{c^{3}}z+\frac{a_{0}}{c^{3}}\rightarrow z
\]
reduce the right side of \eqref{eq:generalgen} to 
\[
\frac{1}{1+t+at^{2}+zt^{2}(t-b)}=:\frac{1}{D(t,z)}.
\]
We deduce that Theorem \ref{thm:generalthm} is equivalent to the
following theorem.
\begin{thm}
\label{thm:maintheorem}Suppose $b\ge0$. The zeros of $H_{m}(z)$
generated by 
\begin{equation}
\sum_{m=0}^{\infty}H_{m}(z)t^{m}=\frac{1}{1+t+at^{2}+zt^{2}(t-b)}:=\frac{1}{D(t,z)}\label{eq:genfunc}
\end{equation}
are real if and only if 
\[
1+a+b\ge0\qquad\text{and}\qquad9-27a+b\ge0.
\]
\end{thm}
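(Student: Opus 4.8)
The plan is to analyze the zeros of $H_{m}(z)$ through the three roots of $D(t,z)$ regarded as a cubic in $t$. Writing $D(t,z)=z\,t^{3}+(a-bz)t^{2}+t+1$ and factoring it as $z(t-t_{1})(t-t_{2})(t-t_{3})$, a partial-fraction expansion of \eqref{eq:genfunc} gives the closed form
\[
H_{m}(z)=-\sum_{j=1}^{3}\frac{1}{t_{j}^{m+1}\,\partial_{t}D(t_{j},z)},
\]
so that the reality of the zeros of $H_{m}$ is governed entirely by how the roots $t_{j}(z)$ move as $z$ runs over the real line. The decisive structural feature is that Vieta's formulas force $e_{2}=t_{1}t_{2}+t_{2}t_{3}+t_{3}t_{1}=\frac{1}{z}$ and $e_{3}=t_{1}t_{2}t_{3}=-\frac{1}{z}$, i.e. $e_{2}=-e_{3}$ for every $z$; this one algebraic identity is what reduces the three-root problem to a single parameter. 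A short computation also records $\deg H_{m}=\lfloor m/2\rfloor$, which is the number of real zeros the sufficiency argument must produce.

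First I would set up the reparametrization. On the interval \eqref{eq:intPm} I expect the relevant root configuration to be a complex-conjugate pair $t_{1,2}=\rho e^{\pm i\theta}$ of smaller modulus together with a real root $t_{3}=s$. Imposing $e_{2}=-e_{3}$ together with the remaining Vieta relation expresses $\rho$ and $s$ through a single parameter, which after the normalization of Section \ref{sec:auxiliary-functions} becomes the auxiliary function $\zeta(\theta)$ defined implicitly by $f(\zeta,\theta)=0$; the cubic whose distinguished root is $\zeta_{0}$ then enters as the discriminant/endpoint condition, pinning down the right endpoint $\zeta_{0}^{2}/(1-2\zeta_{0})^{3}$ of $I_{a,b}$ in \eqref{eq:intdef}. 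This yields the map $z=z(\theta)$ on $\theta\in(0,\pi/2)$, with $z(\theta)\to-\infty$ as $\theta\to\pi/2^{-}$ (where the pair has modulus tending to $0$) and $z(\theta)$ tending to the right endpoint as $\theta\to0^{+}$ (where the pair degenerates into a double root). Proving that $z(\theta)$ is strictly monotone, the content of Section \ref{sec:zthetamonotone}, is the first genuine step, since monotonicity is exactly what lets me transfer zero counts in $\theta$ into distinct real zeros in $z$.

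The heart of the sufficiency direction is to substitute this parametrization into the root formula and show that, up to a nonvanishing real factor,
\[
H_{m}(z(\theta))\ \propto\ \frac{\cos\bigl((m+1)\theta-\phi(\theta)\bigr)}{\rho(\theta)^{m+1}}+(\text{real-root remainder}),
\]
where $\phi(\theta)=\Arg\bigl(1/\partial_{t}D(t_{1},z)\bigr)$ is a controlled phase. Under \eqref{eq:cond} the complex pair stays strictly dominant (strictly smaller modulus than $s$) throughout $(0,\pi/2)$, so the oscillatory cosine controls the sign changes. As $\theta$ sweeps $(0,\pi/2)$, the argument $(m+1)\theta-\phi(\theta)$ passes through $\lfloor m/2\rfloor$ consecutive values of the form $\pi/2+k\pi$, each producing one simple sign change and hence one zero of $\theta\mapsto H_{m}(z(\theta))$; monotonicity of $z(\theta)$ converts these into $\lfloor m/2\rfloor=\deg H_{m}$ distinct real zeros, and the Fundamental Theorem of Algebra then forces all zeros of $H_{m}$ to be real.

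For necessity I would argue contrapositively: if either inequality in \eqref{eq:cond} fails, the endpoint/discriminant analysis shows that along the candidate interval the complex pair ceases to be the pair of smallest modulus (or the parametrization leaves the real axis), so the dominant contribution to $H_{m}$ is no longer purely oscillatory; exhibiting for all large $m$ a sign pattern with fewer than $\lfloor m/2\rfloor$ changes then forces a conjugate pair of non-real zeros. The main obstacle throughout is making the count exact rather than merely asymptotic: the real-root remainder and the phase $\phi(\theta)$ must be controlled uniformly in $m$ near the two endpoints $\theta=0$ and $\theta=\pi/2$, where the oscillatory and non-oscillatory terms can become comparable, and it is precisely the two inequalities in \eqref{eq:cond} that forbid a spurious extra or missing sign change at the boundary. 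Establishing strict monotonicity of $z(\theta)$ and this uniform boundary control are the two steps I expect to demand the most care.
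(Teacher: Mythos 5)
Your sufficiency direction is essentially the paper's own argument: the residue expansion of $H_{m}$ over the three roots of $D(t,z)$, the implicit function $\zeta(\theta)$ from the Vieta relations, strict monotonicity of $z(\theta)$, and a sign-change count of an oscillatory function in $\theta$ matched against the degree bound (note the paper proves, and only needs, $\deg H_{m}\le\lfloor m/2\rfloor$; producing $\lfloor m/2\rfloor$ distinct real zeros then forces equality, so your assertion $\deg H_{m}=\lfloor m/2\rfloor$ is not needed as an input). Two corrections, though. First, the conjugate pair for $z\in I_{a,b}$ has argument in $(\pi/2,\pi)$, not $(0,\pi/2)$: at the right endpoint of $I_{a,b}$ the pair degenerates into the \emph{negative} double root $-\tau$ as $\theta\to\pi$, and the entire sign analysis of $f^{*}(\pm1,\theta)$ relies on $\cos\theta<0$. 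Second, and more seriously for your uniformity discussion: when $a>1/4$ the delicate point is not at the endpoints but \emph{interior} --- $\zeta(\theta)$ blows up at $\theta=\cos^{-1}(-1/(2\sqrt{a}))$, the function $g_{m}(\theta)$ of \eqref{eq:gthetaform} acquires a vertical asymptote there, and the paper must spend Lemma \ref{lem:singinterval} extracting two zeros from the one subinterval $J_{h}$ containing the asymptote to recover the full count of $\lfloor m/2\rfloor$. Your claim that the complex pair ``stays strictly dominant throughout'' and that only the boundary needs care is false in this regime, though the fix is within the spirit of your plan.

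The genuine gap is the necessity direction. Your contrapositive --- exhibit, when \eqref{eq:cond} fails, a sign pattern with fewer than $\lfloor m/2\rfloor$ changes and conclude nonreal zeros --- is logically insufficient: a sign-change count is only a \emph{lower} bound on the number of real zeros, and since $\deg H_{m}$ is merely bounded above by $\lfloor m/2\rfloor$, and real zeros could lie outside the parametrized interval or occur with even multiplicity without a sign change, failing to produce sign changes proves nothing about nonreality. The paper closes this direction with a different idea entirely: Sokal's Theorem \ref{sokal} identifies $\limsup\mathcal{Z}(H_{m})$ as the set of $z$ at which the two smallest-modulus zeros of $D(t,z)$ share the same modulus, so it suffices to construct a single \emph{nonreal} $z$ with this equimodularity property; zeros of $H_{m}$ then accumulate at that nonreal point for large $m$. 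Concretely, when $a<-1-b$ the roots of $f^{*}(\zeta,\pi/2)$ are $0$ and $\pm i\sqrt{-(1+b)/a}$ with the nonreal pair inside the unit disk, and a perturbation of $\theta$ off $\pi/2$ together with Lemma \ref{lem:nonrealz} produces the required $z\notin\mathbb{R}$; when $a>(b+9)/27$ one needs the discriminant analysis of $f^{*}$ --- the unique real zero $x'$ of $\Delta(x)$, the multiple root $\zeta'$ of \eqref{eq:zetazero}, and the substantial computation via the rational function $r(x)$ showing $|\zeta'|<1$. Without an asymptotic zero-distribution theorem of this kind plus the explicit equimodular construction, your necessity argument cannot close.
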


Since the case $b=0$ is proved in Theorem 2 of \cite{tz}, we only
consider $b>0$ in this paper. In fact, to prove the sufficient condition
for the reality of the zeros of $H_{m}(z)$, for each $b>0$, we can
ignore certain values of $a$ by the lemma below. 
\begin{lem}
\label{lem:densesubint}We fix $b>0$ and let $S$ be a dense subset
of $[-1-b,(b+9)/27]$. If 
\[
\mathcal{Z}(H_{m}(z,a,b))\subset I_{a,b}
\]
for all $a\in S$, then 
\[
\mathcal{Z}(H_{m}(z,a^{*},b))\subset I_{a^{*},b}
\]
for all $a^{*}\in[-1-b,(b+9)/27]$. 
\end{lem}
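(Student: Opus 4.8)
The plan is to prove the lemma by a continuity-and-closedness argument: I would show that the set
\[
T:=\left\{ a\in[-1-b,(b+9)/27]:\mathcal{Z}(H_{m}(\cdot,a,b))\subset I_{a,b}\right\}
\]
is closed in $[-1-b,(b+9)/27]$. Since $S\subset T$ by hypothesis and $S$ is dense, a closed set $T$ containing $S$ must equal the whole interval, which is exactly the conclusion. Thus everything reduces to verifying that $T$ is closed, i.e. that if $a_{n}\to a^{*}$ with each $a_{n}\in T$, then $a^{*}\in T$.

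To control the zeros as $a$ varies, I would first record that for $b>0$ the polynomial $H_{m}(z,a,b)$ has, as a polynomial in $z$, a fixed degree $d=\lfloor m/2\rfloor$ whose leading coefficient is a nonzero constant independent of $a$. This is read off directly from \eqref{eq:genfunc}: writing $D(t,z)=u(t)+zv(t)$ with $u=1+t+at^{2}$ and $v=t^{2}(t-b)$, the expansion of $1/(u+zv)$ in powers of $z$ shows the top $z$-power of $H_{m}$ comes from $[t^{m}](v^{k}/u^{k+1})$ at $k=d$, and the resulting coefficient involves only the constant and linear coefficients of $u$ (both independent of $a$), hence is a nonzero quantity depending only on $b$. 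Consequently the $d$ zeros of $H_{m}(z,a,b)$ depend continuously on $a$ on the compact interval, with no zero escaping to infinity; more precisely, the multiset of zeros is a continuous function of $a$ into the $d$-fold symmetric product of $\mathbb{C}$.

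Next I would note that the right endpoint of $I_{a,b}$, namely $r(a):=\zeta_{0}(a)^{2}/(1-2\zeta_{0}(a))^{3}$, is continuous in $a$. The coefficients of the cubic defining $\zeta_{0}$ are affine in $a$, and the existence and uniqueness of its real zero on $(-\infty,-1]\cup[1,\infty)$ is established in Section \ref{sec:auxiliary-functions}; combining uniqueness with the continuous dependence of polynomial roots on coefficients gives continuity of $a\mapsto\zeta_{0}(a)$, and since $|\zeta_{0}(a)|\ge1$ keeps $1-2\zeta_{0}(a)$ bounded away from $0$, the composition $r(a)$ is continuous. Now take $a_{n}\to a^{*}$ with $a_{n}\in T$. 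By the root continuity above, every zero of $H_{m}(\cdot,a^{*},b)$ is a limit of zeros $z_{n}$ of $H_{m}(\cdot,a_{n},b)$; each $z_{n}$ is real with $z_{n}\le r(a_{n})$, so the limit is real with value $\le\lim_{n}r(a_{n})=r(a^{*})$. Hence $\mathcal{Z}(H_{m}(\cdot,a^{*},b))\subset I_{a^{*},b}$, i.e. $a^{*}\in T$, proving $T$ closed.

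The main obstacle is not the closedness bookkeeping but establishing the two continuity statements cleanly: controlling the degree and leading coefficient of $H_{m}(z,a,b)$ so that no zero runs off to infinity as $a$ varies, and justifying continuity of $\zeta_{0}(a)$ — in particular through the value $a=1/4$, where the cubic degenerates to lower degree. Both hinge on facts about the auxiliary function already placed in Section \ref{sec:auxiliary-functions}, so the argument here is genuinely a soft limiting argument once those are in hand.
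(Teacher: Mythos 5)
Your proposal is sound and reaches the same conclusion by a genuinely different mechanism. The paper also takes a sequence $a_{n}\to a^{*}$ with $a_{n}\in S$, but instead of tracking the zeros it fixes a point $z^{*}\notin I_{a^{*},b}$ and shows $H_{m}(z^{*},a^{*},b)\ne0$: it writes $|H_{m}(z^{*},a_{n},b)|=\gamma^{(n)}\prod_{k}|z^{*}-z_{k}^{(n)}|$ over the zeros, bounds each factor below by the distance from $z^{*}$ to a neighborhood of $I_{a^{*},b}$ (using convergence of the right endpoints), bounds the number of factors via Lemma \ref{lem:degreeHm}, obtains a uniform $\delta>0$, and then uses that $a\mapsto H_{m}(z^{*},a,b)$ is a polynomial to pass to the limit. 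Your route through continuity of the root multiset gives slightly more (the zeros themselves converge) at the price of needing the degree to be exactly $\lfloor m/2\rfloor$ with leading coefficient independent of $a$; your computation of that coefficient is correct --- it is $(-b)^{d}$ for $m=2d$ and $(-b)^{d-1}\left(d+(d+1)b\right)$ for $m=2d+1$, nonzero since $b>0$ --- and, notably, this is exactly the fact the paper's proof tacitly needs to keep $\gamma^{(n)}$ bounded away from $0$, a point the published argument passes over in silence. So your extra ingredient actually tightens the paper's own proof. (The cases $m\le1$, where $H_{m}$ is a nonzero constant, are trivially fine.)

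The one step you flag but do not close --- continuity of $r(a)=\zeta_{0}(a)^{2}/(1-2\zeta_{0}(a))^{3}$ through $a=1/4$ --- is a genuine issue, and your proposed mechanism (uniqueness plus continuous dependence of roots on coefficients) does fail there: at $a=1/4$ the cubic degenerates to $(b+2)\zeta^{2}-\tfrac{1}{2}\zeta-\tfrac{1}{4}$, whose two real roots lie in $(-1,0)$ and $(0,1)$, so $\zeta_{0}(1/4)$ does not exist in the paper's sense. What happens is that the root in $(-\infty,-1]\cup[1,\infty)$ is the escaping one, $\zeta_{0}(a)\sim-(b+2)/(8a-2)$, so $\zeta_{0}\to+\infty$ and $r(a)\to0^{-}$ as $a\to1/4^{-}$, while $\zeta_{0}\to-\infty$ and $r(a)\to0^{+}$ as $a\to1/4^{+}$; hence $r$ extends continuously with $r(1/4)=0$, but this requires the explicit asymptotic computation, not soft root continuity (away from $a=1/4$ your compactness-plus-uniqueness argument is fine, since $8a-2$ is bounded away from $0$ and $|1-2\zeta_{0}|\ge1$). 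To be fair, the paper's proof uses exactly the same endpoint convergence without proof, so this is a shared debt rather than a defect unique to your approach; with that asymptotic analysis supplied, your proof is complete.
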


\begin{proof}
Let $a^{*}\in[-1-b,(b+9)/27]$ be given. By the density of $S$ in
$[-1-b,(b+9)/27]$, we can find a sequence $\{a_{n}\}$ in $S$ such
that $a_{n}\rightarrow a^{*}$. For any $z^{*}\notin I_{a^{*},b}$,
we will show that $H_{m}(z^{*},a^{*},b)\ne0$. We note that the zeros
of $H_{m}(z,a_{n},b)$ lie in the interval $I_{a_{n},b}$ whose right
endpoint approaches the right endpoint of $I_{a^{*},b}$ as $n\rightarrow\infty$.
If we let $z_{k}^{(n)}$, $1\le k\le\deg H_{m}(z,a_{n},b)$, be the
zeros of $H_{m}(z,a_{n},b)$ then 
\[
\left|H_{m}(z^{*},a_{n},b)\right|=\gamma^{(n)}\prod_{k=1}^{\deg H_{m}(z,a_{n},b)}\left|z^{*}-z_{k}^{(n)}\right|
\]
where $\gamma^{(n)}$ is the leading coefficient of $H_{m}(z,a_{n},b)$.
Since $\deg H_{m}(z,a_{n})\le\left\lfloor m/2\right\rfloor $ by Lemma
\ref{lem:degreeHm}, using this product representation and the assumption
that $z^{*}\notin I_{a,b}$, we conclude that there is a fixed (independent
of $n$) $\delta>0$ so that $|H_{m}(z^{*},a_{n},b)|>\delta$, for
all large $n$. Since $H_{m}(z^{*},a,b)$ is a polynomial in $a$
for any fixed $z^{*}$, we conclude that 
\[
H_{m}(z^{*},a^{*},b)=\lim_{n\rightarrow\infty}H_{m}(z^{*},a_{n},b)\ne0
\]
and the result follows. 
\end{proof}
As suggested in the introduction, we will count the number of real
zeros of $H_{m}(z)$ and compare this number to its degree. The lemma
below provides an upper bound for the degree. 
\begin{lem}
\label{lem:degreeHm}The degree of the polynomial $H_{m}(z)$ defined
by \eqref{eq:genfunc} is at most $\left\lfloor m/2\right\rfloor $. 
\end{lem}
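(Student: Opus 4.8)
The plan is to extract the degree bound directly from the generating-function identity \eqref{eq:genfunc} by examining how $z$ enters the recurrence. Writing $D(t,z)=1+t+at^2+zt^2(t-b)$, I would expand $1/D(t,z)$ as a formal power series in $t$ and track the degree in $z$ of each coefficient $H_m(z)$. The key observation is that $z$ appears only through the factor $zt^2(t-b)$, so every occurrence of $z$ in the expansion is accompanied by at least two powers of $t$. Concretely, I would use the geometric-series expansion
\[
\frac{1}{D(t,z)}=\sum_{j=0}^{\infty}\left(-t-at^2-zt^2(t-b)\right)^j,
\]
and observe that to obtain a term with $z^k$ one must select the summand $zt^2(t-b)$ from $k$ of the factors, each contributing a factor of $t^2$ or $t^3$, hence at least $2k$ powers of $t$ in total. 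Therefore the coefficient of $t^m$ can contain $z^k$ only if $2k\le m$, which forces $k\le\lfloor m/2\rfloor$ and gives $\deg H_m(z)\le\lfloor m/2\rfloor$.

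Alternatively, and perhaps more cleanly, I would argue from the four-term recurrence itself. Translating \eqref{eq:genfunc} back into recurrence form, the polynomials satisfy
\[
H_m(z)+H_{m-1}(z)+aH_{m-2}(z)+z\bigl(H_{m-3}(z)-bH_{m-2}(z)\bigr)=0,
\]
with $H_0\equiv1$ and $H_{-m}\equiv0$ for $m\ge1$. I would then prove the bound $\deg H_m\le\lfloor m/2\rfloor$ by strong induction on $m$. The base cases $m=0,1$ are immediate since $H_0\equiv1$ and $H_1\equiv-1$ are constant. For the inductive step, assume the bound holds for all indices below $m$. The term $z\,H_{m-3}(z)$ has degree at most $1+\lfloor(m-3)/2\rfloor$ and the term $z\,H_{m-2}(z)$ has degree at most $1+\lfloor(m-2)/2\rfloor$; a short check of the floor arithmetic shows each of these, together with the degrees of $H_{m-1}$ and $aH_{m-2}$, is bounded by $\lfloor m/2\rfloor$, completing the induction.

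The main obstacle is simply the bookkeeping of the floor function across the parity cases, since the troublesome term is $z\,H_{m-3}(z)$, whose naive degree bound $1+\lfloor(m-3)/2\rfloor$ must be verified to not exceed $\lfloor m/2\rfloor$; this requires separating the cases $m$ even and $m$ odd and checking that the added factor of $z$ is always absorbed. I regard the generating-function argument as the safer route because it makes the "$z$ always comes with $t^2$" mechanism transparent and avoids the case analysis entirely, so I would present that as the primary proof and mention the inductive argument only as a remark if space permits.
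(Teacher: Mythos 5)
Both of your arguments are correct, and your second, inductive argument is in fact exactly the paper's proof: the paper derives the recurrence $H_{m}(z)+H_{m-1}(z)+(a-bz)H_{m-2}(z)+zH_{m-3}(z)=0$ for $m\ge1$, with $H_{0}\equiv1$ and $H_{m}\equiv0$ for $m<0$, and states that the bound follows by induction, leaving precisely the floor-function bookkeeping you describe to the reader (the checks $1+\left\lfloor (m-2)/2\right\rfloor =\left\lfloor m/2\right\rfloor$ and $1+\left\lfloor (m-3)/2\right\rfloor \le\left\lfloor m/2\right\rfloor$ are routine in both parity cases, as you note). Your primary, generating-function argument is a genuinely different route and is also sound: the expansion
\[
\frac{1}{D(t,z)}=\sum_{j=0}^{\infty}\left(-t-at^{2}-zt^{2}(t-b)\right)^{j}
\]
is legitimate as a formal power series because the expression in parentheses has no constant term, so the coefficient of $t^{m}$ receives contributions only from finitely many $j\le m$; and since every selection of the summand $zt^{2}(t-b)$ contributes at least $t^{2}$, a monomial $z^{k}t^{m}$ forces $2k\le m$, giving $\deg H_{m}\le\left\lfloor m/2\right\rfloor$ with no case analysis at all. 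As for what each approach buys: the paper's induction is the economical choice in context, since the recurrence is already on record and the lemma is stated as an easy consequence of it; your multinomial count makes the underlying mechanism (each factor of $z$ costs at least two powers of $t$) completely transparent, and it generalizes immediately to any denominator in which $z$ appears only multiplied by $t^{2}$, whereas the inductive route would need to be redone recurrence by recurrence. Either version would serve as a complete proof; your instinct to present the generating-function argument as primary and relegate the induction to a remark is a reasonable stylistic call, though the opposite of the paper's.
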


\begin{proof}
This lemma follows easily from induction applied to the recurrence
\[
H_{m}(z)+H_{m-1}(z)+(a-bz)H_{m-2}(z)+zH_{m-3}(z)=0,\qquad m\ge1,
\]
and the initial condition $H_{0}(z)\equiv1$ and $H_{m}(z)\equiv0$
for $m<0$. 
\end{proof}
To motivate the formula for the function $z(\theta)$ mentioned in
the introduction, we provide some heuristic arguments. For each $z\in\mathbb{R}\backslash\{0\}$,
we let $t_{0}=t_{0}(z),$ $t_{1}=t_{1}(z)$, and $t_{2}=t_{2}(z)$
be the three zeros of $D(t,z)$. If $t_{0}$ and $t_{1}$ are two
distinct complex conjugates and $t_{2}\in\mathbb{R}$, then we let
$t_{0}=\tau e^{-i\theta}$, $t_{1}=\tau e^{i\theta}$, and $t_{2}=\zeta\tau$
where $\zeta\in\mathbb{R}$. From the elementary symmetric equations
\begin{equation}
t_{0}+t_{1}+t_{2}=\frac{bz-a}{z},\qquad t_{0}t_{1}+t_{0}t_{2}+t_{1}t_{2}=\frac{1}{z},\qquad\text{and \qquad}t_{0}t_{1}t_{2}=-\frac{1}{z},\label{eq:elemsym}
\end{equation}
we deduce that 
\[
1+e^{2i\theta}+\zeta e^{i\theta}=\frac{bz-a}{zt_{0}},\qquad e^{2i\theta}+\zeta e^{i\theta}+\zeta e^{3i\theta}=\frac{1}{zt_{0}^{2}},\qquad\text{and}\qquad\zeta e^{3i\theta}=-\frac{1}{zt_{0}^{3}}.
\]
We divide the first equation by $e^{i\theta}$, the second by $e^{2i\theta}$,
and the third by $e^{3i\theta}$ and obtain 
\begin{equation}
2\cos\theta+\zeta=\frac{bz-a}{z\tau},\qquad1+2\zeta\cos\theta=\frac{1}{z\tau^{2}},\qquad\text{and}\qquad\zeta=-\frac{1}{z\tau^{3}}.\label{eq:elemsymzetatheta}
\end{equation}
We solve for $z$ from the third equation 
\begin{equation}
z=-1/\zeta\tau^{3}\label{eq:zdef}
\end{equation}
and substitute $z$ to the first equation 
\begin{equation}
\tau(2\cos\theta+\zeta)=b+a\zeta\tau^{3}\label{eq:tauequation}
\end{equation}
and the second equation

\begin{equation}
\tau=-\frac{1}{\zeta}-2\cos\theta.\label{eq:taudef}
\end{equation}
From these identities, we obtain 
\begin{equation}
\zeta(1+2\zeta\cos\theta)(2\cos\theta+\zeta)=-b\zeta^{2}+a(1+2\zeta\cos\theta)^{3}\label{eq:zetatheta}
\end{equation}
which motivates the definition of the function

\begin{equation}
f(\zeta,\theta)=\zeta(1+2\zeta\cos\theta)(2\cos\theta+\zeta)+b\zeta^{2}-a(1+2\zeta\cos\theta)^{3}.\label{eq:zetapoly}
\end{equation}
Converse to the construction above, we have the following lemma. 
\begin{lem}
\label{lem:zerosdenom}For any $\theta\in(0,\pi)$, if $\zeta$ is
a zero of $f(\zeta,\theta)$ and $z$ and $\tau$ are given in \eqref{eq:zdef}
and \eqref{eq:taudef}, then $\tau e^{\pm i\theta}$ and $\zeta\tau$
are the three zeros of $D(t,z)$. 
\end{lem}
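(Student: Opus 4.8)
The plan is to verify directly that the cubic whose roots are the three prescribed numbers coincides with $D(t,z)$. Expanding the denominator in \eqref{eq:genfunc}, we have $D(t,z)=zt^{3}+(a-bz)t^{2}+t+1$, a genuine cubic with leading coefficient $z$; here $z\ne0$ and $\tau\ne0$, since the very formulas \eqref{eq:zdef} and \eqref{eq:taudef} presuppose $\zeta\ne0$ and $\tau\ne0$. It therefore suffices to prove the factorization
\[
D(t,z)=z\,(t-\tau e^{i\theta})(t-\tau e^{-i\theta})(t-\zeta\tau).
\]
Since $\theta\in(0,\pi)$, the factors $t-\tau e^{i\theta}$ and $t-\tau e^{-i\theta}$ combine into the quadratic $t^{2}-2\tau\cos\theta\,t+\tau^{2}$, so the right-hand side equals $z(t^{2}-2\tau\cos\theta\,t+\tau^{2})(t-\zeta\tau)$, and the whole lemma reduces to matching the four coefficients of this expansion against those of $D(t,z)$.

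First I would expand the product, obtaining
\[
z\bigl[t^{3}-\tau(\zeta+2\cos\theta)t^{2}+\tau^{2}(1+2\zeta\cos\theta)t-\zeta\tau^{3}\bigr].
\]
The leading coefficients agree trivially. The constant term is $-z\zeta\tau^{3}$, which equals $1$ at once from $z=-1/(\zeta\tau^{3})$ in \eqref{eq:zdef}. For the coefficient of $t$, rewriting \eqref{eq:taudef} as $\zeta\tau=-(1+2\zeta\cos\theta)$ turns $z\tau^{2}(1+2\zeta\cos\theta)$ into $-z\zeta\tau^{3}$, again equal to $1$ by \eqref{eq:zdef}. Thus three of the four coefficient identities follow purely from the definitions of $z$ and $\tau$, without yet invoking the hypothesis $f(\zeta,\theta)=0$.

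The coefficient of $t^{2}$ is the crux, and it is exactly here that $f(\zeta,\theta)=0$ enters. Matching requires $-z\tau(\zeta+2\cos\theta)=a-bz$, i.e.\ $\tau(\zeta+2\cos\theta)=b+a\zeta\tau^{3}$ after substituting \eqref{eq:zdef}; this is \eqref{eq:tauequation}. Eliminating $\tau$ by $\zeta\tau=-(1+2\zeta\cos\theta)$ and clearing a factor of $\zeta^{2}$ converts this identity into
\[
\zeta(1+2\zeta\cos\theta)(2\cos\theta+\zeta)+b\zeta^{2}-a(1+2\zeta\cos\theta)^{3}=0,
\]
which is precisely $f(\zeta,\theta)=0$. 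In effect this runs the chain \eqref{eq:zetatheta}--\eqref{eq:zetapoly} in reverse. With all four coefficients in agreement, the factorization holds, so $\tau e^{\pm i\theta}$ and $\zeta\tau$ are the three zeros of $D(t,z)$. The single substantive computation is this last one; the remaining concern is merely the bookkeeping that keeps the non-degeneracy conditions $\zeta,\tau\ne0$ in force throughout.
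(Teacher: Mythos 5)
Your proof is correct and takes essentially the same route as the paper's: the paper likewise runs the heuristic derivation in reverse, combining \eqref{eq:zetatheta} with \eqref{eq:taudef} to recover \eqref{eq:tauequation} and then, via \eqref{eq:zdef}, the symmetric-function identities \eqref{eq:elemsym} --- which is exactly your coefficient matching, since the coefficients of your expanded cubic are the elementary symmetric functions of the three proposed roots. You simply spell out the computations the paper leaves terse (including the useful observation that only the $t^{2}$ coefficient actually invokes $f(\zeta,\theta)=0$, the other three following from \eqref{eq:zdef} and \eqref{eq:taudef} alone).
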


\begin{proof}
We reverse the arguments above by combining \eqref{eq:zetatheta}
and \eqref{eq:taudef} to obtain \eqref{eq:tauequation}. Together
with \eqref{eq:zdef}, we deduce \eqref{eq:elemsymzetatheta} and
\eqref{eq:elemsym} follows. 
\end{proof}
As a polynomial in $\zeta$, its reciprocal $f^{*}(\zeta,\theta):=\zeta^{3}f(1/\zeta,\theta)$
is 
\begin{align}
f^{*}(\zeta,\theta) & =(\zeta+2\cos\theta)(2\zeta\cos\theta+1)+b\zeta-a(\zeta+2\cos\theta)^{3}\nonumber \\
 & =-a\zeta^{3}+(2\cos\theta-6a\cos\theta)\zeta^{2}+(1+b+4\cos^{2}\theta-12a\cos^{2}\theta)\zeta+2\cos\theta-8a\cos^{3}\theta.\label{eq:zetapolyrecip}
\end{align}
For the sufficient direction of Theorem \ref{thm:maintheorem}, we
limit the domain of $\theta$ to $(\pi/2,\pi)$. Our first goal here
is to show that for any $\theta\in(\pi/2,\pi)$, $f^{*}(\zeta,\theta)$
has exactly one real zero on the interval $(-1,1)$ by considering
the sign of this polynomial at the endpoints. 
\begin{lem}
\label{lem:oppsigns}For any fixed $\theta\in(\pi/2,\pi)$, if 
\begin{align*}
2-8a+8a^{2}+ab & \ne0,\\
b+1-a & \ne0,\\
9-27a+b & \ne0,\\
1+a+b & \ne0,
\end{align*}
then we have 
\[
f^{*}(-1,\theta)f^{*}(1,\theta)<0.
\]
\end{lem}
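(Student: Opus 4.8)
The plan is to read the endpoint values of $f^*$ straight off \eqref{eq:zetapolyrecip}. Substituting $\zeta=1$ gives $f^*(1,\theta)=(1+2\cos\theta)^2+b-a(1+2\cos\theta)^3$, and $\zeta=-1$ gives $f^*(-1,\theta)=a(1-2\cos\theta)^3-(1-2\cos\theta)^2-b$. Introducing the single cubic $g(x):=-ax^3+x^2+b$, these read $f^*(1,\theta)=g(1+2\cos\theta)$ and $f^*(-1,\theta)=-g(1-2\cos\theta)$, so $f^*(1,\theta)f^*(-1,\theta)=-g(1+2\cos\theta)\,g(1-2\cos\theta)$. Thus the claim $f^*(-1,\theta)f^*(1,\theta)<0$ is equivalent to showing that $g(1+2\cos\theta)$ and $g(1-2\cos\theta)$ are both nonzero and of the \emph{same} sign.

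The two arguments $1\pm2\cos\theta$ are symmetric about $1$ (they sum to $2$) and, for $\theta\in(\pi/2,\pi)$, satisfy $1+2\cos\theta\in(-1,1)$ and $1-2\cos\theta\in(1,3)$. The structural input I would lean on is that, because $b>0$, the cubic $g$ has exactly one real root $x_0$: its two critical values $g(0)=b$ and $g(2/(3a))=b+4/(27a^2)$ are both positive (equivalently, its discriminant $-b(4+27a^2b)$ is negative). Hence $g$ changes sign only at $x_0$, and $g(1+2\cos\theta)\,g(1-2\cos\theta)>0$ exactly when $x_0$ lies outside the interval spanned by $1\pm2\cos\theta$; since that interval sits inside $(-1,3)$, it suffices to keep $x_0\notin(-1,3)$.

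To surface the stated conditions I would collapse everything to one variable. The product $g(1+2\cos\theta)\,g(1-2\cos\theta)$ is even in $\cos\theta$, hence a polynomial in $w:=1-4\cos^2\theta\in(-3,1)$; expanding it yields
\[
Q(w)=a^2w^3+(1-2a)w^2+(6ab-2b)w+(b^2+4b-8ab),
\]
whose leading coefficient $a^2$ is positive and whose only real root is $w^*=x_0(2-x_0)$ (the other two roots of $Q$ come from the complex roots of $g$). The payoff is that the endpoint values factor exactly through the hypotheses: one checks
\[
Q(1)=(b+1-a)^2,\qquad Q(-3)=(9-27a+b)(1+a+b).
\]
Recall that in this sufficiency direction we assume $1+a+b\ge0$ and $9-27a+b\ge0$, so $Q(-3)\ge0$, and the conditions $1+a+b\ne0$, $9-27a+b\ne0$ upgrade this to $Q(-3)>0$. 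Since $Q$ is negative on $(-\infty,w^*)$ and positive on $(w^*,\infty)$, $Q(-3)>0$ forces $w^*<-3$, whence $Q>0$ on all of $(-3,1)$ and $f^*(1,\theta)f^*(-1,\theta)=-Q(w)<0$. The condition $b+1-a\ne0$ enters as $Q(1)=(b+1-a)^2>0$, keeping the root off the right endpoint, and the case $a=0$ is immediate because then $Q(w)=(w-b)^2+4b>0$.

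The main obstacle is the single-real-root fact for $g$ combined with verifying the two endpoint factorizations, which are precisely what bind the abstract sign analysis to the explicit algebraic conditions; once $Q(-3)=(9-27a+b)(1+a+b)$ is in hand, locating $w^*$ outside $(-3,1)$ costs nothing. I note that the remaining hypothesis $2-8a+8a^2+ab\ne0$ does not enter this route (indeed $8a^2+(b-8)a+2>0$ automatically whenever $b<16$); I expect it is reserved not for the endpoint signs but for the later uniqueness/monotonicity step that promotes \emph{at least one} zero of $f^*(\zeta,\theta)$ on $(-1,1)$ to \emph{exactly one}.
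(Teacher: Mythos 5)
Your reduction is correct up to one crucial step, and that step contains a real gap. The identities $f^*(1,\theta)=g(1+2\cos\theta)$, $f^*(-1,\theta)=-g(1-2\cos\theta)$ with $g(x)=-ax^3+x^2+b$, the expansion $Q(w)=a^2w^3+(1-2a)w^2+(6ab-2b)w+(b^2+4b-8ab)$ in $w=1-4\cos^2\theta$, and the endpoint values $Q(1)=(b+1-a)^2$, $Q(-3)=(9-27a+b)(1+a+b)$ all check out; indeed this is an affine reparametrization of the paper's own proof, which treats the same product as a cubic in $\cos^2\theta$. The gap is your assertion that the only real root of $Q$ is $w^*=x_0(2-x_0)$ because ``the other two roots of $Q$ come from the complex roots of $g$.'' Writing $g(s)=-a\prod_i(s-r_i)$ gives $Q(w)=a^2\prod_i\bigl(w-r_i(2-r_i)\bigr)$, and for a nonreal root $r=p+iq$ of $g$ the image $r(2-r)=(2p-p^2+q^2)+2iq(1-p)$ is \emph{real} precisely when $p=1$. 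Using Vieta for $g$ (root sum $1/a$, pairwise sum $0$, product $b/a$), the condition $p=1$ is equivalent to $ab+2(2a-1)^2=0$, i.e.\ to $2-8a+8a^2+ab=0$. So your closing remark is exactly backwards: the hypothesis $2-8a+8a^2+ab\ne0$ is not reserved for a later uniqueness step; it is the precise condition that makes your one-real-root claim true, and it is where the paper uses it as well --- the paper's proof rests on the computed discriminant $-4096b(27a^2b+4)(2-8a+8a^2+ab)^2$ of the product cubic, which is strictly negative only because of this hypothesis. Your parenthetical observation that the condition is automatic for $b<16$ is right but does not cover $b\ge16$: for instance $a=-1$, $b=18$ satisfies \eqref{eq:cond}, yet $g(x)=x^3+x^2+18=(x+3)(x^2-2x+6)$ has complex roots $1\pm i\sqrt5$, and $Q(w)=(w+15)(w-6)^2$ has three real roots, falsifying the claimed root count.

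The repair is short, and with it your argument closes: either invoke the hypothesis through the equivalence above (or through the discriminant of $Q$, which is $-b(27a^2b+4)(2-8a+8a^2+ab)^2$) to conclude $p\ne1$, so $Q$ has a unique, simple real root and your chain $Q(-3)>0\Rightarrow w^*<-3\Rightarrow Q>0$ on $(-3,1)$ is complete; or note that even in the degenerate case the double root equals $4-2/a>1$, lies outside $(-3,1)$, and enters squared, so positivity on $(-3,1)$ survives --- though that proves more than the lemma asserts. Two smaller points of comparison with the paper: once $w^*<-3$ is forced, the hypothesis $b+1-a\ne0$ is genuinely unused in your route (the paper keeps it because it brackets with both endpoint values $g(0)$ and $g(1)$ of its cubic), and your explicit importing of the sufficiency-direction inequalities \eqref{eq:cond} to get $Q(-3)>0$ is legitimate and in fact matches what the paper does tacitly, since its step $g(1)=-(9-27a+b)(1+a+b)<0$ likewise requires the product to be positive, not merely nonzero.
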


\begin{proof}
If we let $x=\cos\theta$, then $f^{*}(-1,\theta)f^{*}(1,\theta)$
is a cubic polynomial in terms of $x^{2}$. We let that polynomial
be $g(x)$ and by a computer its discriminant is 
\[
-4096b(27a^{2}b+4)(2-8a+8a^{2}+ab)^{2}<0,
\]
from which we deduce that $g(x)$ has only one real root. Then the
two inequalities 
\begin{align*}
g(0) & =-(b+1-a)^{2}<0,\\
g(1) & =-(9-27a+b)(1+a+b)<0,
\end{align*}
imply that $g(x)<0$, $\forall x\in(0,1)$ and the lemma follows. 
\end{proof}
We note that Lemma \ref{lem:densesubint} allows us to focus on the
values of $a$ in which the all conditions of Lemma \ref{lem:oppsigns}
are met. In fact, when $a>0$ we know the sign of each factor $f^{*}(-1,\theta)$
and $f^{*}(1,\theta)$ in the lemma below.
\begin{lem}
\label{lem:signf*atpm1} If $a>0$ and $b-27a+9>0$, then $f^{*}(1,\theta)>0$
and $f^{*}(-1,\theta)<0$ for all $\theta\in(\pi/2,\pi)$. 
\end{lem}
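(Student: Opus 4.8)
The plan is to evaluate $f^{*}(\zeta,\theta)$ at $\zeta=\pm1$ directly from the factored expression in \eqref{eq:zetapolyrecip} and reduce each to a one-variable cubic whose sign is controlled by the two hypotheses. Writing $x=\cos\theta$, the restriction $\theta\in(\pi/2,\pi)$ forces $x\in(-1,0)$. Substituting $\zeta=1$ gives $f^{*}(1,\theta)=(1+2x)^{2}+b-a(1+2x)^{3}$, and substituting $\zeta=-1$ gives $f^{*}(-1,\theta)=-(1-2x)^{2}-b+a(1-2x)^{3}$. The convenient change of variables is $u=1+2x$ and $v=1-2x$; as $x$ ranges over $(-1,0)$ we have $u\in(-1,1)$ and $v\in(1,3)$. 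The two claims then become $\phi(u):=-au^{3}+u^{2}+b>0$ for all $u\in(-1,1)$ and $\psi(v):=av^{3}-v^{2}-b<0$ for all $v\in(1,3)$.

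Next I would analyze each cubic by elementary calculus. For $\phi$ we have $\phi'(u)=u(2-3au)$, so the critical points are $u=0$, a local minimum since $\phi''(0)=2>0$, and $u=2/(3a)>0$, a local maximum. Since $\phi$ is strictly decreasing on $(-1,0)$ and the value $\phi(-1)=1+a+b$ clearly dominates, the infimum of $\phi$ over $(-1,1)$ equals $\min\{\phi(0),\phi(1)\}$, where $\phi(0)=b$ and $\phi(1)=1+b-a$. Symmetrically, $\psi'(v)=v(3av-2)$ has $v=2/(3a)$ as its only relevant critical point; a short case split on whether $2/(3a)$ lies below $1$, inside $(1,3)$, or above $3$ shows that the supremum of $\psi$ on $(1,3)$ equals $\max\{\psi(1),\psi(3)\}$, where $\psi(1)=a-1-b$ and $\psi(3)=27a-9-b$.

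Finally I would invoke the two hypotheses. The condition $b-27a+9>0$ is exactly $\psi(3)=27a-9-b<0$; combined with $\psi(1)=a-1-b$, which is negative whenever $a<2/3$ (since then $a<2/3<1<1+b$), and noting that for $a\ge2/3$ the supremum is governed by $\psi(3)$, we conclude $\psi<0$ on $(1,3)$. For $\phi$, the value $\phi(0)=b>0$ handles every $a$ for which the minimum sits at $u=0$, namely $a\le2/3$; in the remaining range $a>2/3$ the minimum can be $\phi(1)=1+b-a$, and here I substitute the lower bound $b>27a-9$ to obtain $\phi(1)>26a-8$, which is positive for $a>2/3$. Hence $\phi>0$ on $(-1,1)$ and $\psi<0$ on $(1,3)$, which is the assertion.

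The main obstacle is that positivity of $f^{*}(1,\theta)$ is not automatic from $a>0$ alone: the boundary value $\phi(1)=1+b-a$ can be negative for large $a$, and it is precisely here that the hypothesis $b-27a+9>0$ must be fed in, through the crude but sufficient estimate $\phi(1)>26a-8$. Keeping track of where the interior critical point $2/(3a)$ falls relative to the intervals $(-1,1)$ and $(1,3)$, so that one correctly identifies which endpoint governs the extremum, is the only place requiring care; everything else is a routine sign check.
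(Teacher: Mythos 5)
Your proof is correct, and for the $f^{*}(-1,\theta)$ half it takes a genuinely different route from the paper. For $f^{*}(1,\theta)$ the two arguments are essentially the same: both reduce to the substitution $u=1+2\cos\theta\in(-1,1)$ and a sign analysis of $\phi(u)=-au^{3}+u^{2}+b$ (the paper splits at $a=1$ with ad hoc estimates, feeding in $27a-9<b$ and $-14a+9<0$ in the second case; your split at $a=2/3$ via the critical points $u=0$ and $u=2/(3a)$, with the bound $\phi(1)>26a-8$, is tidier but equivalent in spirit). The real divergence is at $\zeta=-1$: the paper does not analyze $f^{*}(-1,\theta)$ directly, but deduces its sign from $f^{*}(1,\theta)>0$ together with Lemma \ref{lem:oppsigns}, whose proof rests on a computer-assisted discriminant computation and which carries four non-degeneracy hypotheses (e.g.\ $2-8a+8a^{2}+ab\ne0$), later dispatched through the density argument of Lemma \ref{lem:densesubint}. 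Your direct analysis of $\psi(v)=av^{3}-v^{2}-b$ on $v=1-2\cos\theta\in(1,3)$, with endpoint values $\psi(1)=a-1-b$ and $\psi(3)=27a-9-b$ (the latter negative exactly by hypothesis, the former negative since $a<(b+9)/27\le 1+b$), avoids both the computer algebra and the genericity conditions, so it is more elementary and self-contained; the paper's route has the offsetting advantage of reusing a lemma it needs anyway elsewhere. One point you should make explicit: the step $\phi(0)=b>0$ invokes the paper's standing convention $b>0$ (stated just after Theorem \ref{thm:maintheorem}), and this is genuinely needed --- at $b=0$ one has $f^{*}(1,2\pi/3)=0$, so the strict inequality fails without it. With that caveat recorded, all your case splits check out.
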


\begin{proof}
To show $f^{*}(1,\theta)>0$, we consider two cases $0<a\le1$ and
$a>1$. In the first case, the inequalities 
\[
a(2\cos\theta+1)-1<a-1<0
\]
imply 
\begin{align*}
-f^{*}(1,\theta) & =a-(2\cos\theta-6a\cos\theta)-(1+b+4\cos^{2}\theta-12a\cos^{2}\theta)-(2\cos\theta-8a\cos^{3}\theta)\\
 & =(a+6a\cos\theta+12a\cos^{2}\theta+8a\cos^{3}\theta)-(4\cos^{2}\theta+4\cos\theta+1)-b\\
 & =a(1+2\cos\theta)^{3}-(1+2\cos\theta)^{2}-b\\
 & =(2\cos\theta+1)^{2}(a(2\cos\theta+1)-1)-b<0.
\end{align*}
In the later case, we have 
\begin{align*}
-f^{*}(1,\theta) & =a-2\cos\theta(2-3a)-(1+b+4\cos^{2}\theta-12a\cos^{2}\theta)+8a\cos^{3}\theta\\
 & <a-2\cos\theta(2-3a)-1-b-4\cos^{2}\theta+12a+8a\cos^{3}\theta\\
 & =-2\cos\theta(2-3a)-1-4\cos^{2}\theta+8a\cos^{3}\theta+\left(-b+27a-9\right)+\left(-14a+9\right)<0.
\end{align*}
The claim that $f^{*}(-1,\theta)<0$ follows from Lemma \ref{lem:oppsigns}. 
\end{proof}
\begin{rem}
\label{rem:zerosf*a>0}As a consequence of Lemma \ref{lem:signf*atpm1}
and the fact that the leading coefficient of $f^{*}(\zeta,\theta)$
is $-a$, we conclude that if $a>0$, then $f^{*}(\zeta,\theta)$
has one zero on each of the interval $(-\infty,-1)$, $(-1,1)$, and
$(1,\infty)$ and consequently this polynomial has exactly one zero
on $(-1,1)$.

For the case $a<0$, we consider the lemma below. 
\end{rem}

\begin{lem}
\label{lem: a<0 roots f*} Assume $a<0$, $1+b+a>0$, and $\theta\in(\pi/2,\pi)$.
If all the zeros in $\zeta$ of $f^{*}(\zeta,\theta)$ are real, then
exactly one of them lies in the interval $(0,1)$, and the other two
lie in $(1,\infty)$. 
\end{lem}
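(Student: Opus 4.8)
My plan is to pin down the three (assumed real) zeros of $f^{*}(\zeta,\theta)$ by tracking its sign at $\zeta=0,\pm1$ and adding one derivative estimate. Writing $x=\cos\theta\in(-1,0)$ and reading coefficients from \eqref{eq:zetapolyrecip}, a direct evaluation gives $f^{*}(0,\theta)=2x(1-4ax^{2})<0$ (since $a<0$ makes $1-4ax^{2}>0$), and from the factored form $f^{*}(-1,\theta)=a(1-2x)^{3}-(1-2x)^{2}-b<0$ (since $a<0$, $b>0$, $1-2x>0$). Because the excluded equalities of Lemma \ref{lem:oppsigns} hold only off a finite set of $a$'s, which Lemma \ref{lem:densesubint} lets us ignore, $f^{*}(-1,\theta)f^{*}(1,\theta)<0$ forces $f^{*}(1,\theta)>0$; alternatively one proves $f^{*}(1,\theta)>0$ directly by writing, with $u=1+2x$, $f^{*}(1,\theta)=u^{2}(1-au)+b$ when $-1<a<0$ (here $|au|<1$), and $f^{*}(1,\theta)=(u+1)(-au^{2}+(1+a)u-(1+a))+(1+a+b)$ when $a\le-1$, the quadratic factor being nonnegative because its discriminant $(1+a)(1-3a)\le0$. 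Hence $f^{*}$ has at least one zero in $(0,1)$.

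Next I would exclude negative zeros. The coefficients of $f^{*}(\zeta,\theta)$ alternate in sign ($-a>0$, $2x(1-3a)<0$, $1+b+4x^{2}(1-3a)>0$, $2x(1-4ax^{2})<0$), whereas those of $f^{*}(-\zeta,\theta)$ are all negative; by Descartes' rule of signs $f^{*}$ has no negative zero. Thus all three real zeros are positive, and since $f^{*}(0,\theta)<0<f^{*}(1,\theta)$ the number of zeros in $(0,1)$ is odd, hence $1$ or $3$. It remains only to rule out the value $3$.

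The engine of the argument is the clean estimate $\dfrac{\partial}{\partial\zeta}f^{*}(1,\theta)\ge b>0$. Indeed $\dfrac{\partial}{\partial\zeta}f^{*}(1,\theta)=(1+b-3a)+4(1-3a)\,x(1+x)$, and since $x(1+x)\ge-\frac14$ and $1-3a>0$, this is at least $b$. Because all three zeros are real, $\dfrac{\partial}{\partial\zeta}f^{*}$ has two real critical points, and both are positive (their sum $\frac{4x(1-3a)}{3a}$ and product $\frac{1+b+4x^{2}(1-3a)}{-3a}$ are positive). The strict inequality $\dfrac{\partial}{\partial\zeta}f^{*}(1,\theta)>0$ means $1$ lies outside the interval bounded by these two critical points, so they lie on the \emph{same} side of $1$. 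If both exceed $1$, then $f^{*}$ is increasing on $(-\infty,1]$, so there is exactly one zero in $(0,1)$ and the remaining two lie in $(1,\infty)$, which is the claim. If instead both critical points lie in $(0,1)$, then, using $f^{*}(1,\theta)>0$ and that $f^{*}$ increases past its local minimum, all three zeros fall in $(0,1)$.

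The entire difficulty is therefore concentrated in excluding this second alternative, equivalently in showing that the centroid of the zeros $\frac13\sum\zeta_i=\frac{2x(1-3a)}{3a}$ exceeds $1$; I expect this to be the main obstacle. I would attack it in one of two ways. The first is a no-crossing argument: since $f^{*}(1,\theta)>0$ strictly, no zero ever equals $1$, so the number of zeros in $(1,\infty)$ is locally constant, hence constant on each connected component of the parameter set giving three real zeros (zeros can leave $(1,\infty)$ only by colliding and becoming complex, which exits this set); exhibiting one point with the $(0,1,2)$ pattern (e.g.\ $\theta$ near $\pi$) then fixes the count everywhere, provided one checks that the relevant region is connected. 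The second, more computational, route mirrors Lemma \ref{lem:oppsigns}: compute the discriminant in $\zeta$ of $f^{*}(\zeta,\theta)$ and verify that on the region $\frac{2x(1-3a)}{3a}<1$ (both critical points below $1$) it is negative, contradicting the three-real-zeros hypothesis. Either way, once the centroid is shown to exceed $1$, the dichotomy of the previous paragraph closes the proof.
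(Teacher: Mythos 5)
Your groundwork is correct and in places more self-contained than the paper's: the sign evaluations $f^{*}(0,\theta)<0$, $f^{*}(-1,\theta)=a(1-2\cos\theta)^{3}-(1-2\cos\theta)^{2}-b<0$, the Descartes'-rule exclusion of negative zeros (the paper asserts positivity of the real zeros from \eqref{eq:zetapolyrecip} without spelling this out), the direct proof that $f^{*}(1,\theta)=u^{2}(1-au)+b>0$ with $u=1+2\cos\theta$ (the paper's Lemma \ref{lem:signf*atpm1} covers only $a>0$ and for $a<0$ it leans on Lemma \ref{lem:oppsigns}), the estimate $\left.\partial f^{*}/\partial\zeta\right|_{\zeta=1}=(2\cos\theta+1)^{2}(1-3a)+b>0$, and the resulting dichotomy (both critical points on the same side of $1$) all check out and match the paper's architecture. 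But there is a genuine gap: you explicitly leave unproved the one step that carries the entire lemma, namely excluding the alternative that both critical points -- and hence all three zeros -- lie in $(0,1)$. What you offer in its place are two strategies, neither executed: the no-crossing argument requires verifying connectedness of the parameter region where all three zeros are real (nontrivial, and not attempted), and the discriminant-of-$f^{*}$ route requires a sign analysis of a large eliminant that you do not perform. A proof proposal that says ``I expect this to be the main obstacle'' at exactly the point where the obstacle sits is not a proof.

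The paper closes this gap with a short, purely elementary computation that you missed: it exploits the \emph{reality} of the two critical points, which forces the discriminant of $\partial f^{*}/\partial\zeta$ to be nonnegative, i.e. $4\cos^{2}\theta+3a+3ab-12a\cos^{2}\theta\ge0$, equivalently (as $\cos\theta<0$) the bound \eqref{eq:upperboundcosine}, $\cos\theta\le-\sqrt{-3a(1+b)/\left(4(1-3a)\right)}$. Then, since $a<0$ gives $1+b-3ab>0$ and hence $\sqrt{1+b}>\sqrt{-3a/(1-3a)}$, multiplying by $\sqrt{-3a/\left(4(1-3a)\right)}$ yields $\cos\theta<3a/\left(2(1-3a)\right)$, so $2(1-3a)\cos\theta-3a<0\le\sqrt{4\cos^{2}\theta+3a+3ab-12a\cos^{2}\theta}$, and dividing by $3a<0$ shows that the critical point $\bigl(2(1-3a)\cos\theta-\sqrt{4\cos^{2}\theta+3a+3ab-12a\cos^{2}\theta}\,\bigr)/3a$ exceeds $1$. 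Combined with your (and the paper's) observation that $\partial f^{*}/\partial\zeta>0$ at $\zeta=1$, both critical points exceed $1$, and interlacing finishes the lemma. In your language: the constraint that makes the critical points real already forbids your second alternative, so neither connectedness nor the discriminant of $f^{*}$ itself is needed. If you add this two-line argument, your write-up becomes a complete proof along essentially the paper's lines.
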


\begin{proof}
By Lemma \ref{lem:oppsigns} and \eqref{eq:zetapolyrecip}, the real
zeros of $f^{*}(\zeta,\theta)$ are positive and at least one of which
lie in $(0,1)$. If all the zeros of $f^{*}(\zeta,\theta)$ are real,
then so are two zeros of its derivative $df^{*}(\zeta,\theta)/d\zeta$
\[
\frac{(2-6a)\cos\theta\pm\sqrt{3a\left(b-4\cos^{2}\theta+1\right)+4\cos^{2}\theta}}{3a}.
\]
With the note that the leading coefficient of $df^{*}(\zeta,\theta)/d\zeta$
is positive, we will show that these two zeros lie in the interval
$(1,\infty)$ by claiming that one of the two zeros lies in this interval
and $df^{*}(\zeta,\theta)/d\zeta>0$ when $\zeta=1$. The lemma will
follow from the interlacing zeros of $f^{*}(\zeta,\theta)$ and its
derivative. The second claim comes directly from the identity 
\[
\left.\frac{df^{*}(\zeta,\theta)}{d\zeta}\right|_{\zeta=1}=(2\cos\theta+1)^{2}(1-3a)+b>0.
\]

Since the two zeros of $df^{*}(\zeta,\theta)/d\zeta$ are real, we
have 
\[
4\cos^{2}\theta+3a+3ab-12a\cos^{2}\theta\geq0
\]
or equivalently 
\begin{equation}
\cos\theta\leq-\sqrt{\frac{-3a(1+b)}{4(1-3a)}}.\label{eq:upperboundcosine}
\end{equation}
On the other hand, the assumption $a<0$ implies that $1+b-3ab>0$
and consequently 
\[
\sqrt{1+b}>\sqrt{\frac{-3a}{1-3a}}.
\]
We multiply both sides of this inequality by $\sqrt{-3a/4(1-3a)}$
and apply \eqref{eq:upperboundcosine} to get 
\[
\cos\theta\leq-\sqrt{\frac{-3a(1+b)}{4(1-3a)}}<\frac{3a}{2(1-3a)}
\]
which gives
\[
2(1-3a)\cos\theta-3a<0.
\]
Hence 
\[
2(1-3a)\cos\theta-3a<\sqrt{4\cos^{2}\theta+3a+3ab-12a\cos^{2}\theta}
\]
and consequently
\[
\frac{2(1-3a)\cos\theta-\sqrt{4\cos^{2}\theta+3a+3ab-12a\cos^{2}\theta}}{3a}>1.
\]
\end{proof}
With all the previous lemmas at our disposal, the formal proof of
Theorem \ref{thm:maintheorem} begins by the definition of the function
$1/\zeta(\theta)$ as the only real zero of $f^{*}(\zeta,\theta)$
on the interval $(-1,1)$. The existence and uniqueness of this zero
comes from Remark \ref{rem:zerosf*a>0} and Lemma \ref{lem: a<0 roots f*}.
By the Implicit Function Theorem $1/\zeta(\theta)$ is smooth on $(\pi/2,\pi)$.
We next define the two functions $\tau(\theta)$ and $z(\theta)$
according to \eqref{eq:taudef} and \eqref{eq:zdef} respectively.
Since $1/\zeta(\theta)$ is smooth on $(\pi/2,\pi)$, so is $\tau(\theta)$.
With Lemma \ref{lem:positivetau} below and \eqref{eq:zdef}, the
function $z(\theta)$ is also smooth on $(\pi/2,\pi)$. 
\begin{lem}
\label{lem:positivetau}For any $\theta\in(\pi/2,\pi)$, we have $\tau(\theta)>0.$ 
\end{lem}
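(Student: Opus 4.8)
The plan is to show $\tau(\theta) > 0$ by exploiting the defining relation \eqref{eq:taudef}, namely $\tau = -1/\zeta - 2\cos\theta$. Since $1/\zeta(\theta)$ is, by construction, the unique real zero of $f^*(\zeta,\theta)$ on $(-1,1)$, I would first translate the bounds on $1/\zeta$ into information about $\zeta$ itself. On $(\pi/2,\pi)$ we have $\cos\theta \in (-1,0)$, so $-2\cos\theta \in (0,2)$; hence it suffices to control the sign and magnitude of $-1/\zeta$. Writing $w := 1/\zeta(\theta) \in (-1,1)$, the identity becomes $\tau = -w - 2\cos\theta = -w + 2|\cos\theta|$, and the goal is to prove this is strictly positive.

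First I would separate the argument by the sign of $a$, mirroring the case division already used in Remark \ref{rem:zerosf*a>0} and Lemma \ref{lem: a<0 roots f*}. In the case $a>0$, Remark \ref{rem:zerosf*a>0} locates exactly one root $w$ of $f^*$ in $(-1,1)$, and the sign information $f^*(-1,\theta)<0$, $f^*(1,\theta)>0$ from Lemma \ref{lem:signf*atpm1} (together with leading coefficient $-a<0$) pins down which way $f^*$ crosses zero; I would use this to get a sharper bound than $w<1$, ideally $w < 2|\cos\theta|$, so that $\tau>0$ follows immediately. In the case $a<0$, Lemma \ref{lem: a<0 roots f*} shows the relevant root lies in $(0,1)$, so $w \in (0,1)$ and thus $-w \in (-1,0)$; then $\tau = -w + 2|\cos\theta|$ need not be automatically positive, and I would need the quantitative estimate on $|\cos\theta|$ implicit in the proof of Lemma \ref{lem: a<0 roots f*} (for instance the bound \eqref{eq:upperboundcosine}) to guarantee $2|\cos\theta| > w$.

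The cleanest route, which I would attempt before the case analysis above, is to evaluate $f^*$ directly at the candidate boundary value of $w$. Concretely, the inequality $\tau>0$ is equivalent to $-1/\zeta < 2|\cos\theta| = -2\cos\theta$, i.e. $w = 1/\zeta < -2\cos\theta$. I would therefore substitute $\zeta = -1/(2\cos\theta)$ into $f^*(\zeta,\theta)$ (equivalently evaluate $f$ at $\zeta = -2\cos\theta$) and compute the sign of the resulting expression in $\cos\theta$, $a$, $b$. Using the sign of $f^*$ at this point together with its known sign at the endpoints $\pm 1$ and the uniqueness of the root in $(-1,1)$, an intermediate-value argument should localize $w$ strictly to one side of $-2\cos\theta$, yielding $\tau>0$. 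The value $f^*(-2\cos\theta,\theta)$ should simplify substantially because $\zeta + 2\cos\theta$ vanishes there, killing several terms in \eqref{eq:zetapolyrecip}.

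The main obstacle I anticipate is the $a<0$ subcase: there the root $w$ sits in $(0,1)$ with the same sign as $2|\cos\theta|$, so positivity of $\tau$ is a genuine competition between $w$ and $2|\cos\theta|$ rather than a sign giveaway, and it will require the full strength of the estimate \eqref{eq:upperboundcosine} (which forces $|\cos\theta|$ to be bounded below away from $0$ when $a<0$). I expect the evaluation-at-$(-2\cos\theta)$ trick to collapse the bookkeeping in both subcases, but verifying that the sign of $f^*(-2\cos\theta,\theta)$ is consistent with $w < -2\cos\theta$ for all $\theta\in(\pi/2,\pi)$ under the standing hypotheses $b>0$, $1+a+b>0$ is the step most likely to demand careful case-checking rather than a single clean inequality.
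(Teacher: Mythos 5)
Your proposal is correct, and its algebraic heart is the very identity the paper's proof pivots on, but the packaging is genuinely different. The paper argues globally in $\theta$: since $1/\zeta(\theta)$ is continuous on $(\pi/2,\pi)$, so is $\tau(\theta)=-1/\zeta(\theta)-2\cos\theta$; at the single point $\theta=2\pi/3$ one has $\tau=-1/\zeta(2\pi/3)+1>0$ because $|1/\zeta|<1$; and $\tau(\theta_{0})=0$ is impossible, since it would force $1+2\zeta(\theta_{0})\cos\theta_{0}=0$, which substituted into \eqref{eq:zetatheta} yields $0=-b\zeta^{2}\ne0$. That contradiction is exactly your computation $f^{*}(-2\cos\theta,\theta)=-2b\cos\theta>0$ (the terms carrying $\zeta+2\cos\theta$ in \eqref{eq:zetapolyrecip} vanish), so both proofs rest on the same evaluation; the paper's continuity-plus-one-sign-check wrapper simply avoids all case analysis in $a$ and all endpoint bookkeeping. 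Your pointwise route does close, and more easily than you feared: for $\theta\in[2\pi/3,\pi)$ the claim $w=1/\zeta<1\le-2\cos\theta$ is trivial, while for $\theta\in(\pi/2,2\pi/3)$, where $-2\cos\theta\in(0,1)$, you combine $f^{*}(-2\cos\theta,\theta)>0$ with $f^{*}(-1,\theta)<0$ from Lemma \ref{lem:signf*atpm1} when $a>0$, or with $f^{*}(0,\theta)=2\cos\theta(1-4a\cos^{2}\theta)<0$ when $a\le0$, and the uniqueness of the root of $f^{*}$ in $(-1,1)$ then places $w$ strictly to the left of $-2\cos\theta$ by the Intermediate Value Theorem; so your anticipated obstacle in the case $a<0$ evaporates, and the estimate \eqref{eq:upperboundcosine} is never needed --- the sign of the free coefficient of $f^{*}$ already does the localization. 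Two small repairs: your sentence proposing to substitute $\zeta=-1/(2\cos\theta)$ into $f^{*}$ has the reciprocals swapped (the correct test point for the root $w$ of $f^{*}$ is $-2\cos\theta$, equivalently one evaluates $f$ at $-1/(2\cos\theta)$, giving $b/(4\cos^{2}\theta)>0$), which your very next sentence quietly corrects; and you should note explicitly that borderline parameter values where the endpoint-sign lemmas fail are discarded by the density argument of Lemma \ref{lem:densesubint}, as the paper does. On balance, the paper's proof buys uniformity in $a$ at the price of invoking smoothness of $\zeta(\theta)$, while yours buys an explicit per-$\theta$ localization of $w$ at the price of splitting on the sign of $a$ and importing Lemmas \ref{lem:signf*atpm1} and \ref{lem: a<0 roots f*}.
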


\begin{proof}
We will show that $\tau(\theta)$ has no zero on $(\pi/2,\pi)$ and
the lemma will follow from 
\[
\tau(2\pi/3)=-\frac{1}{\zeta(2\pi/3)}+1>0.
\]
Indeed, if $\theta_{0}\in(\pi/2,\pi)$ is a zero of $\tau(\theta)$,
then $1/\zeta(\theta_{0})\ne0$ and $1+2\zeta(\theta_{0})\cos\theta_{0}=0$,
a contradiction to \eqref{eq:zetatheta}. 
\end{proof}
\begin{lem}
\label{lem:asymptotes}The only zero of $1/\zeta(\theta)$ on $(\pi/2,\pi)$
is $\cos^{-1}(-1/(2\sqrt{a}))$ when $a>1/4$. 
\end{lem}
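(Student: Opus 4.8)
The plan is to use the defining property of $1/\zeta(\theta)$ as the unique real zero of $f^{*}(\zeta,\theta)$ on $(-1,1)$. Since the hypothesis $a>1/4$ forces $a>0$, Remark \ref{rem:zerosf*a>0} guarantees that $f^{*}(\zeta,\theta)$ has exactly one zero on $(-1,1)$ for each $\theta\in(\pi/2,\pi)$. Hence for a fixed $\theta_{0}$ the value $1/\zeta(\theta_{0})$ vanishes if and only if $\zeta=0$ is that unique root; and because $0\in(-1,1)$, this is equivalent to the single scalar condition $f^{*}(0,\theta_{0})=0$.

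I would then read off the constant term of $f^{*}$ from \eqref{eq:zetapolyrecip}:
\[
f^{*}(0,\theta)=2\cos\theta-8a\cos^{3}\theta=2\cos\theta\left(1-4a\cos^{2}\theta\right).
\]
On $(\pi/2,\pi)$ we have $\cos\theta\in(-1,0)$, so the factor $2\cos\theta$ never vanishes and the equation $f^{*}(0,\theta)=0$ collapses to $4a\cos^{2}\theta=1$. Solving gives $\cos\theta=\pm1/(2\sqrt{a})$, and the negativity of the cosine on this interval selects $\cos\theta=-1/(2\sqrt{a})$.

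It remains to confirm that this value is actually attained. The number $-1/(2\sqrt{a})$ lies in $(-1,0)$ exactly when $1/(2\sqrt{a})<1$, i.e.\ $a>1/4$, which is precisely the standing hypothesis; thus $\theta_{0}=\cos^{-1}(-1/(2\sqrt{a}))$ is a genuine point of $(\pi/2,\pi)$. Since $\cos$ is strictly decreasing and hence injective on this interval, no other $\theta$ satisfies the equation, so $\theta_{0}$ is the unique zero of $1/\zeta(\theta)$, as claimed.

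The computations are elementary, so I expect the only delicate point to be the equivalence established in the first paragraph: a zero of the constant coefficient $f^{*}(0,\theta_{0})$ must correspond to a zero of the selected branch $1/\zeta(\theta)$, rather than to some other (possibly complex or exterior) root of $f^{*}$. This identification rests entirely on the uniqueness of the root on $(-1,1)$, so I would be careful to invoke $a>0$ (via Remark \ref{rem:zerosf*a>0}) before making it.
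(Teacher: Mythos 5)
Your proof is correct and takes essentially the same route as the paper: both reduce the vanishing of $1/\zeta(\theta)$ to the vanishing of the free coefficient $2\cos\theta\left(1-4a\cos^{2}\theta\right)$ of $f^{*}(\zeta,\theta)$, solve to get $\cos\theta=-1/(2\sqrt{a})$, and use $a>1/4$ to place this value in $(\pi/2,\pi)$. The only cosmetic difference is that you justify the equivalence through the uniqueness of the zero on $(-1,1)$ (Remark \ref{rem:zerosf*a>0}), while the paper instead rules out the degenerate possibility that $f^{*}$ collapses when $1-4a\cos^{2}\theta=0$ by checking that the coefficient of $\zeta$, namely $1+b+4\cos^{2}\theta-12a\cos^{2}\theta$, remains nonzero---a point that is anyway moot since the leading coefficient $-a$ is nonzero for $a>1/4$.
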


\begin{proof}
By the definition of $1/\zeta(\theta)$, we note that $\theta$ is
a zero of $1/\zeta(\theta)$ if and only if the free coefficient of
$f^{*}(\zeta,\theta)$ 
\begin{equation}
2\cos\theta(1-4a\cos^{2}\theta)=0.\label{eq:freecoefficientreciprocal}
\end{equation}
unless $f^{*}(\zeta,\theta)$ is a constant $0$ polynomial under
\eqref{eq:freecoefficientreciprocal}. However, this case does not
occur since the coefficient of $\zeta$ of $f^{*}(\zeta,\theta)$
is 
\[
1+b+4\cos^{2}\theta-12a\cos^{2}\theta
\]
which is nonzero when $1-4a\cos^{2}\theta=0$ because 
\[
b+\frac{1}{a}-2>b+\frac{27}{9+b}-2=\frac{b^{2}+7b+9}{9+b}>0.
\]
\end{proof}
\begin{lem}
\label{lem:signzeta} If $1/4<a<b/27+1/3$ and $-1<\cos\theta<-1/2\sqrt{a}$,
then $\zeta(\theta)<0$. 
\end{lem}

\begin{proof}
From Lemma \ref{lem:asymptotes} , $\zeta(\theta)$ is continuous
on $(\cos^{-1}(-1/(2\sqrt{a})),\pi)$ and does not change its sign
on this interval. Thus it suffices to consider the sign of $\zeta(\theta)$
at a single point. We consider the two cases below.

In the case $a\geq1/3$, we let $\theta\rightarrow\pi$ and observe
from \eqref{eq:zetatheta} that $\zeta(\theta)$ approaches $\zeta_{0}$
where 
\begin{align}
0 & =(-2+8a)\zeta_{0}^{3}+(-12a+b+5)\zeta_{0}^{2}+(-2+6a)\zeta_{0}-a\nonumber \\
 & =(-2+8a)\zeta_{0}^{3}+(-27a+b+9)\zeta_{0}^{2}+(-2+8a)\zeta_{0}+(12a-4)\zeta_{0}^{2}+3a\zeta_{0}^{2}-2a\zeta_{0}-a.\label{eq:fpi}
\end{align}
If by contradiction that $\zeta_{0}>0$, then $\zeta_{0}\ge1$ by
Lemma \ref{lem:oppsigns} and consequently 
\[
3a\zeta_{0}^{2}-2a\zeta_{0}-a=2a\zeta_{0}(\zeta_{0}-1)+a(\zeta_{0}^{2}-1)\ge0.
\]
Under the assumption that $a\ge1/3$, we have $-2+8a>0$ and all other
the coefficients of \eqref{eq:fpi} are nonnegative which is a contradiction.

Similarly, in the case $1/4<a<1/3$, E.q. \eqref{eq:zetatheta} with
$\theta\rightarrow\pi$ yields 
\begin{align*}
0>-b\zeta_{0}^{2} & =(1-2\zeta_{0})(\zeta_{0}(-2+\zeta_{0})-a(1-2\zeta_{0})^{2})\\
 & =(1-2\zeta_{0})(\zeta_{0}^{2}(1-4a)+2\zeta_{0}(-1+2a)-a).
\end{align*}
With the same arguments in the previous case, we conclude that $\zeta_{0}<0$. 
\end{proof}
\begin{lem}
\label{lem:limitzetaatasymptote0}If $9-27a+b\geq0$, then $\zeta(\theta)\rightarrow+\infty$
as $\cos\theta\rightarrow0^{-}$. 
\end{lem}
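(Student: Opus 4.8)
The plan is to reduce the statement to a question about the small zero of $f^{*}$ and then apply the Implicit Function Theorem. Since $1/\zeta(\theta)$ is defined as the unique zero of $f^{*}(\zeta,\theta)$ in $(-1,1)$, the conclusion $\zeta(\theta)\to+\infty$ is equivalent to $1/\zeta(\theta)\to0^{+}$. Writing $x=\cos\theta$, so that $\cos\theta\to0^{-}$ corresponds to $\theta\to(\pi/2)^{+}$, I would regard the reciprocal polynomial in \eqref{eq:zetapolyrecip} as a bivariate polynomial $F(\zeta,x)$ and study its zero near $(\zeta,x)=(0,0)$.

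First I would record the limiting polynomial at $x=0$, namely $F(\zeta,0)=\zeta\bigl(1+b-a\zeta^{2}\bigr)$, whose only real zeros are $\zeta=0$ and, when $a>0$, $\zeta=\pm\sqrt{(1+b)/a}$. This is exactly where the hypothesis enters: from $9-27a+b\ge0$ we get $a\le(b+9)/27$, and since $b>0$ this forces $a<1+b$, so $\sqrt{(1+b)/a}>1$. Hence $\zeta=0$ is the only zero of $F(\zeta,0)$ in $(-1,1)$, and by continuity the two stray zeros remain outside $(-1,1)$ for $x$ near $0$; together with the uniqueness of the zero of $f^{*}$ in $(-1,1)$ (Remark \ref{rem:zerosf*a>0} and Lemma \ref{lem: a<0 roots f*}) this guarantees that whatever branch passes through $(0,0)$ is precisely $1/\zeta(\theta)$.

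Next I would apply the Implicit Function Theorem at $(\zeta,x)=(0,0)$. Since $\partial_{\zeta}F(0,0)=1+b\ne0$, there is a smooth branch $w(x)$ with $w(0)=0$ and $F(w(x),x)\equiv0$ near $x=0$, and its derivative is $w'(0)=-\partial_{x}F(0,0)/\partial_{\zeta}F(0,0)=-2/(1+b)<0$. By the matching in the previous step, $w(x)=1/\zeta(\theta)$ for $\theta$ close to $(\pi/2)^{+}$. Because $w'(0)<0$, for $x<0$ small we have $w(x)\approx-2x/(1+b)>0$; thus $1/\zeta(\theta)\to0^{+}$ and therefore $\zeta(\theta)=1/w(x)\to+\infty$.

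The step I expect to be the main obstacle is the matching: confirming that the local IFT branch really is the globally defined $1/\zeta(\theta)$ rather than a spurious zero. I would handle this by combining the uniqueness of the zero of $f^{*}$ in $(-1,1)$ with the fact (from the hypothesis) that the other zeros of $F(\zeta,0)$ are bounded away from $[-1,1]$, so that for all sufficiently small $x<0$ the branch $w(x)$ is the only candidate. As a robust alternative I would argue directly by continuity of roots: any subsequential limit of $1/\zeta(\theta)$ as $\theta\to(\pi/2)^{+}$ must be a zero of $F(\zeta,0)$ lying in $[-1,1]$, hence equal to $0$, and the sign $f^{*}(0,\theta)=2\cos\theta(1-4a\cos^{2}\theta)<0$ for small $\cos\theta<0$ together with $\partial_{\zeta}f^{*}(0,\theta)\to1+b>0$ pins the approaching root to the positive side.
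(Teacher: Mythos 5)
Your proof is correct, and it settles the decisive point by a different mechanism than the paper, although both arguments start from the same observation: as $\cos\theta\to0^{-}$ the reciprocal polynomial \eqref{eq:zetapolyrecip} degenerates to $(b+1)\zeta-a\zeta^{3}$, which has a simple zero at the origin. From there the paper passes back to $f(\zeta,\theta)$: exactly one of its three zeros escapes to infinity, and since the sum of the zeros, $\frac{1+b+4\cos^{2}\theta-12a\cos^{2}\theta}{8a\cos^{3}\theta-2\cos\theta}$, tends to $+\infty$ while the other two zeros stay bounded, the escaping zero (which is $\zeta(\theta)$) must go to $+\infty$ --- a global Vieta argument. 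You instead stay with $f^{*}$ and work locally at $(\zeta,x)=(0,0)$: the Implicit Function Theorem with $\partial_{\zeta}F(0,0)=1+b$ and $\partial_{x}F(0,0)=2$ gives $w'(0)=-2/(1+b)<0$, so the small zero is positive for $x<0$; equivalently, as in your fallback, the constant term $f^{*}(0,\theta)=2\cos\theta(1-4a\cos^{2}\theta)<0$ together with $\partial_{\zeta}f^{*}\approx1+b>0$ near the origin pins the zero to the positive side, and both computations check out. Your version has one genuine merit over the paper's: you make the identification step explicit --- that the branch through the origin really is $1/\zeta(\theta)$ --- and in doing so you exhibit exactly where the hypothesis $9-27a+b\ge0$ enters, namely through $a\le(b+9)/27<1+b$, which places the stray zeros $\pm\sqrt{(1+b)/a}$ (when $a>0$; they are non-real when $a<0$) strictly outside $[-1,1]$, so every subsequential limit of $1/\zeta(\theta)$ in $[-1,1]$ is forced to be $0$. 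In the paper this identification is left implicit, with the hypothesis entering only upstream through the uniqueness of the zero of $f^{*}$ on $(-1,1)$ that defines $\zeta(\theta)$ (Remark \ref{rem:zerosf*a>0} and Lemma \ref{lem: a<0 roots f*}). What the paper's route buys in exchange is brevity: the Vieta computation dispenses with the derivative calculation and the explicit matching in three lines.
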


\begin{proof}
As $\cos\theta\rightarrow0^{-}$, the reciprocal of $f(\zeta,\theta)$
as a polynomial in $\zeta$ approaches 
\[
(b+1)\zeta-a\zeta^{3}
\]
which has a simple zero at $0$. Thus exactly one of the zero in $\zeta$
of $f(\zeta,\theta)$ approaches $\pm\infty$. Since the sum of the
three zeros of $f(\zeta,\theta)$ is 
\[
\frac{1+b+4\cos^{2}\theta-12a\cos^{2}\theta}{8a\cos^{3}\theta-2\cos\theta}\rightarrow+\infty,
\]
as $\cos\theta\rightarrow0^{-}$, we conclude that $\zeta(\theta)\rightarrow+\infty$. 
\end{proof}
In the case $a>1/4$, from Lemmas \ref{lem:asymptotes} and \ref{lem:limitzetaatasymptote0},
the continuity of $\zeta(\theta)$ on $\left(\pi/2,\cos^{-1}(-1/2\sqrt{a})\right)$,
and the inequality $\left|\zeta(\theta)\right|>1$, we deduce that
$\zeta(\theta)\rightarrow+\infty$ as $\cos\theta\rightarrow-1/2\sqrt{a}^{+}$.

\section{\label{sec:zthetamonotone}The monotonicity of $z(\theta)$}

The goal of this section is to show that $z(\theta)$ is strictly
increasing on $(\pi/2,\pi)$. We recall from Lemma \ref{lem:zerosdenom}
that the three zeros in $t$ of the polynomial $1+t+at^{2}+z(\theta)t^{2}(t-b)$
are $t_{0}=\tau(\theta)e^{-i\theta}$, $t_{1}=\tau(\theta)e^{i\theta}$,
and $t_{2}=\zeta(\theta)\tau(\theta)$. Consequently 
\begin{equation}
z=-\frac{1+t_{0}+at_{0}^{2}}{t_{0}^{2}(t_{0}-b)}.\label{eq:zfrac}
\end{equation}
If we let $1+t_{0}+at_{0}^{2}=a(t_{0}-\tau_{1})(t_{0}-\tau_{2})$,
then the logarithmic derivatives of both sides and the identity 
\[
dt_{0}=d\tau e^{-i\theta}-i\tau e^{-i\theta}d\theta=t_{0}\left(\frac{d\tau}{\tau}-id\theta\right)
\]
give 
\begin{equation}
\frac{dz}{z}=h(t_{0})\left(\frac{d\tau}{\tau}-id\theta\right),\label{eq:logderiv-z}
\end{equation}
where 
\[
h(t_{0}):=\frac{t_{0}}{t_{0}-\tau_{1}}+\frac{t_{0}}{t_{0}-\tau_{2}}-\frac{t_{0}}{t_{0}-b}-2.
\]
Since $dz/z\in\mathbb{R}$, the imaginary and the real parts of \eqref{eq:logderiv-z}
give
\[
\Im h(t_{0})\frac{d\tau}{\tau}=\Re h(t_{0})d\theta
\]
and 
\begin{align*}
\frac{dz}{z} & =\Re h(t_{0})\frac{d\tau}{\tau}+\Im(h(t_{0}))d\theta.
\end{align*}
We multiply both sides of the second equation by $\Im h(t_{0})$ and
apply the the first equation to obtain 
\begin{equation}
\Im h(t_{0}).\frac{dz}{d\theta}=z\left|h(t_{0})\right|^{2}\label{eq:dzdtheta}
\end{equation}
where 
\begin{align}
\Im h(t_{0}) & =\Im\left(\frac{-\overline{\tau_{1}}t_{0}}{\left|t_{0}-\tau_{1}\right|^{2}}+\frac{-\overline{\tau_{2}}t_{0}}{\left|t_{0}-\tau_{2}\right|^{2}}+\frac{bt_{0}}{\left|t_{0}-b\right|^{2}}\right).\label{eq:Imf-fractions}
\end{align}

\begin{lem}
If $a<0$, then the function $z(\theta)$ is negative and strictly
increasing on $(\pi/2,\pi)$. 
\end{lem}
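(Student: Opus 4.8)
The plan is to prove the two assertions—negativity and strict monotonicity—separately, reducing the monotonicity entirely to the single sign inequality $\Im h(t_0)<0$.

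\emph{Negativity.} For $a<0$ the real zero $1/\zeta(\theta)$ of $f^{*}(\zeta,\theta)$ lying in $(-1,1)$ is in fact positive, by the reasoning in Lemma~\ref{lem: a<0 roots f*}, so $\zeta(\theta)>0$ on $(\pi/2,\pi)$. Combining this with $\tau(\theta)>0$ from Lemma~\ref{lem:positivetau} and the formula \eqref{eq:zdef}, I obtain $z(\theta)=-1/(\zeta(\theta)\tau(\theta)^{3})<0$ throughout $(\pi/2,\pi)$.

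\emph{Monotonicity.} I would start from the identity \eqref{eq:dzdtheta}, namely $\Im h(t_0)\,\frac{dz}{d\theta}=z\left|h(t_0)\right|^{2}$. Since $z<0$, its right side is $\le 0$; once I have shown $\Im h(t_0)<0$, this forces in particular $h(t_0)\neq 0$, so $\left|h(t_0)\right|^{2}>0$ and the right side is strictly negative. Solving for the derivative then yields $\frac{dz}{d\theta}=z\left|h(t_0)\right|^{2}/\Im h(t_0)>0$ (negative over negative), which is exactly the asserted strict increase. Thus the whole lemma hinges on the sign of $\Im h(t_0)$.

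\emph{The crux: $\Im h(t_0)<0$.} Here $\tau_1,\tau_2$ are the roots of $at^{2}+t+1$, whose discriminant $1-4a>0$ for $a<0$, so they are real and (since $\tau_1\tau_2=1/a<0$) of opposite sign. Using $\Im t_0=-\tau\sin\theta$ in \eqref{eq:Imf-fractions}, each fraction contributes $r\tau\sin\theta/|t_0-r|^{2}$ for a real root $r$, giving
\[
\Im h(t_0)=\tau\sin\theta\left(\frac{\tau_1}{|t_0-\tau_1|^{2}}+\frac{\tau_2}{|t_0-\tau_2|^{2}}-\frac{b}{|t_0-b|^{2}}\right).
\]
Because $\tau\sin\theta>0$ and $b>0$ makes the last term negative, it suffices to show that the sum of the first two fractions is negative. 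Writing $|t_0-r|^{2}=r^{2}-2\tau r\cos\theta+\tau^{2}$ and combining those two fractions over their positive common denominator, and then substituting $\tau_1+\tau_2=-1/a$ and $\tau_1\tau_2=1/a$, the resulting numerator (after clearing the factor $a^{2}>0$) equals $-1-a\tau(\tau+4\cos\theta)$.

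The main obstacle is showing this numerator is negative, and it is resolved by one observation: from \eqref{eq:taudef}, $\tau+4\cos\theta=-1/\zeta(\theta)+2\cos\theta<0$, since both summands are negative ($1/\zeta(\theta)>0$ and $\cos\theta<0$ on $(\pi/2,\pi)$). Then $-a>0$ and $\tau>0$ force $-a\tau(\tau+4\cos\theta)<0$, whence $-1-a\tau(\tau+4\cos\theta)<-1<0$. This gives $\Im h(t_0)<0$, and with the two reductions above the lemma follows.
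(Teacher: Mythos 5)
Your proof is correct and takes essentially the same approach as the paper's: the same reduction via \eqref{eq:dzdtheta}, the same combination of the two $\tau_{1},\tau_{2}$ fractions in \eqref{eq:Imf-fractions} using $\tau_{1}+\tau_{2}=-1/a$ and $\tau_{1}\tau_{2}=1/a$, and the same substitution of \eqref{eq:taudef}; your numerator $-1-a\tau(\tau+4\cos\theta)$ is exactly the paper's expression $a\tau/\zeta-2a\tau\cos\theta-1$ rewritten. The only (harmless) cosmetic difference is that you spell out why $\Im h(t_{0})<0$ forces $|h(t_{0})|^{2}>0$, which the paper leaves implicit.
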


\begin{proof}
We first note that \eqref{eq:zetatheta} has only positive solutions
in $\zeta$ by Lemma \ref{lem: a<0 roots f*} and consequently $z(\theta)$
is negative by \eqref{eq:zdef} and Lemma \ref{lem:positivetau}.
Since $a<0$, we have $\tau_{1},\tau_{2}\in\mathbb{R}$. With the
identities $\tau_{1}+\tau_{2}=-1/a$ and $\tau_{1}\tau_{2}=1/a$,
we obtain 
\begin{align*}
\tau_{1}\left|t_{0}-\tau_{2}\right|^{2}+\tau_{2}\left|t_{0}-\tau_{1}\right|^{2} & =\tau_{1}(\tau^{2}-2\tau_{2}\tau\cos\theta+\tau_{2}^{2})+\tau_{2}(\tau^{2}-2\tau_{1}\tau\cos\theta+\tau_{1}^{2})\\
 & =-\frac{1}{a}\tau^{2}-\frac{4}{a}\tau\cos\theta-\frac{1}{a^{2}}\\
 & =\frac{1}{a^{2}}\left(-a\tau^{2}-4a\tau\cos\theta-1\right)\\
 & =\frac{1}{a^{2}}\left(-a\tau\left(-1/\zeta-2\cos\theta\right)-4a\tau\cos\theta-1\right)\\
 & =\frac{1}{a^{2}}(a\tau/\zeta-2a\tau\cos\theta-1)<0,
\end{align*}
and the lemma follows from \eqref{eq:dzdtheta}. 
\end{proof}
\begin{lem}
If $0\le a\le1/4$, then the function $z(\theta)$ is negative and
strictly increasing on $(\pi/2,\pi)$. 
\end{lem}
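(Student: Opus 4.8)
The plan is to run the same argument as the preceding lemma through equation \eqref{eq:dzdtheta}, but the sign bookkeeping is markedly simpler in this range. Everything reduces to two facts on $(\pi/2,\pi)$: that $z(\theta)<0$ and that $\Im h(t_0)<0$. Granting both, \eqref{eq:dzdtheta} gives $\frac{dz}{d\theta}=z\,|h(t_0)|^2/\Im h(t_0)$; since $\Im h(t_0)<0$ forces $h(t_0)\neq0$, we have $|h(t_0)|^2>0$, and the quotient of two negative quantities is positive, so $z(\theta)$ is strictly increasing.

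To get $z(\theta)<0$ I would first fix the sign of $\zeta(\theta)$. By definition $1/\zeta(\theta)\in(-1,1)$ (existence of this root coming from Remark \ref{rem:zerosf*a>0}), so $|\zeta(\theta)|>1$; and because $a\le1/4$ the putative asymptote $\cos^{-1}(-1/(2\sqrt a))$ falls outside $(\pi/2,\pi)$, so Lemma \ref{lem:asymptotes} shows $1/\zeta(\theta)$ never vanishes there. Thus $\zeta(\theta)$ is continuous, finite, and bounded away from $0$, hence of constant sign on $(\pi/2,\pi)$. Since $a\le1/4$ gives $9-27a+b\ge 9/4+b>0$, Lemma \ref{lem:limitzetaatasymptote0} applies and yields $\zeta(\theta)\to+\infty$ as $\cos\theta\to0^-$, so the constant sign is positive. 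Combined with $\tau(\theta)>0$ (Lemma \ref{lem:positivetau}), equation \eqref{eq:zdef} gives $z(\theta)=-1/(\zeta\tau^3)<0$.

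The sign of $\Im h(t_0)$ is where this case is genuinely easier than $a<0$. For $0<a\le1/4$ the quadratic $1+t+at^2$ has discriminant $1-4a\ge0$, so its roots $\tau_1,\tau_2$ are real, and from $\tau_1+\tau_2=-1/a<0$, $\tau_1\tau_2=1/a>0$ both are negative. Plugging this together with $b>0$ into \eqref{eq:Imf-fractions}, and using $\Im(t_0)=-\tau\sin\theta<0$, we obtain
\[
\Im h(t_0)=\Im(t_0)\left(\frac{-\tau_1}{|t_0-\tau_1|^2}+\frac{-\tau_2}{|t_0-\tau_2|^2}+\frac{b}{|t_0-b|^2}\right)<0,
\]
since now every term inside the parentheses is manifestly positive; none of the delicate cancellation used for $a<0$ is needed. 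The endpoint $a=1/4$ is identical (a double root at $-2$), while $a=0$ is the degenerate case in which $1+t$ supplies the single negative root $-1$ and the $\tau_2$-term simply drops out of \eqref{eq:Imf-fractions}; alternatively $a=0$ is covered by the density Lemma \ref{lem:densesubint}.

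I do not anticipate a substantive obstacle: the only step needing care is assembling $\zeta(\theta)>0$ from Lemmas \ref{lem:asymptotes} and \ref{lem:limitzetaatasymptote0} and Remark \ref{rem:zerosf*a>0}, after which the positivity of the multiplier of $\Im(t_0)$ and the concluding line from \eqref{eq:dzdtheta} are routine.
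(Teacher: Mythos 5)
Your proposal is correct and takes essentially the same route as the paper: negativity of $\tau_{1},\tau_{2}$ gives $\Im h(t_{0})<0$ via \eqref{eq:Imf-fractions}, Lemmas \ref{lem:asymptotes} and \ref{lem:limitzetaatasymptote0} together with \eqref{eq:zdef} give $z(\theta)<0$, and \eqref{eq:dzdtheta} concludes. Your additional handling of the degenerate endpoints $a=0$ and $a=1/4$ merely makes explicit what the paper leaves implicit.
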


\begin{proof}
From $0\le a\le1/4$, we conclude that $\tau_{1}$ and $\tau_{2}$
are negative and thus 
\begin{align*}
\Im h(t_{0}) & =\frac{\tau_{1}\tau\sin\theta}{\left|t_{0}-\tau_{1}\right|^{2}}+\frac{\tau_{2}\tau\sin\theta}{\left|t_{0}-\tau_{2}\right|^{2}}-\frac{b\tau\sin\theta}{\left|t_{0}-b\right|^{2}}<0.
\end{align*}
Also \eqref{eq:zdef} and Lemmas \ref{lem:asymptotes} and \ref{lem:limitzetaatasymptote0}
imply that $z(\theta)$ is negative on $(\pi/2,\pi)$. The lemma follows
from \eqref{eq:dzdtheta}. 
\end{proof}
We now consider the case $a>1/4$ in which $\tau_{1},\tau_{2}\notin\mathbb{R}$.
If we write $\tau_{1}=x+iy$ and $\tau_{2}=x-iy$, then 
\begin{align}
\Im\left(-\overline{\tau_{1}}t_{0}|t_{0}-\tau_{2}|^{2}-\overline{\tau_{2}}t_{0}|t_{0}-\tau_{1}|^{2}\right) & =\Im\left(-\tau_{2}t_{0}|t_{0}-\tau_{2}|^{2}-\tau_{1}t_{0}|t_{0}-\tau_{1}|^{2}\right)\nonumber \\
 & =2\tau\sin\theta(x\tau^{2}\cos^{2}\theta-2\tau\cos\theta(x^{2}+y^{2})+x(x^{2}+y^{2}+\tau^{2}\sin^{2}\theta))\nonumber \\
 & =2\tau\sin\theta\left(x\tau^{2}-\frac{2}{a}\tau\cos\theta+\frac{x}{a}\right)\nonumber \\
 & =2\tau\sin\theta\left(-\frac{\tau^{2}}{2a}-\frac{2}{a}\tau\cos\theta-\frac{1}{2a^{2}}\right)\nonumber \\
 & =2\tau\sin\theta\left(-\frac{1}{2a}(-1/\zeta-2\cos\theta)^{2}-\frac{2}{a}(-1/\zeta-2\cos\theta)\cos\theta-\frac{1}{2a^{2}}\right)\nonumber \\
 & =2\tau\sin\theta\left(-\frac{1}{2a\zeta^{2}}+\frac{2}{a}\cos^{2}\theta-\frac{1}{2a^{2}}\right)\nonumber \\
 & =\frac{\tau\sin\theta}{a^{2}\zeta^{2}}\left(-a+\zeta^{2}(4a\cos^{2}\theta-1)\right).\label{eq:firsttwoterms}
\end{align}
If $-1/2\sqrt{a}<\cos\theta<0$, then $\Im h(t_{0})<0$. Consequently
Lemmas \ref{lem:asymptotes} and \ref{lem:limitzetaatasymptote0}
and \eqref{eq:dzdtheta} imply that $z(\theta)$ is negative and strictly
increasing on $\left(\pi/2,\cos^{-1}(-1/2\sqrt{a})\right)$.

For the remainder of this section, we will show $z(\theta)$ is strictly
increasing when $a>1/4$ and 
\begin{equation}
-1<\cos\theta<-\frac{1}{2\sqrt{a}}.\label{eq:leftasymp}
\end{equation}
From Lemma \ref{lem:signzeta}, it suffices to show $\Im h(t_{0})>0$.
We first show that \eqref{eq:firsttwoterms} is positive or equivalently
\begin{equation}
\zeta<\frac{-a}{\sqrt{4a^{2}\cos^{2}\theta-a}}.\label{eq:zetainqage1/4}
\end{equation}

Since $\zeta<-1$, this claim is trivial if 
\[
-1<\frac{-a}{\sqrt{4a^{2}\cos^{2}\theta-a}}.
\]
To prove \eqref{eq:zetainqage1/4} for the remaining case, we will
show that the polynomial $f(\zeta,\theta)$ has no zero in $\zeta$
on the interval 
\[
\left[-\frac{a}{\sqrt{4a^{2}\cos^{2}\theta-a}},-1\right]
\]
by showing that this polynomial has one zero on each of the intervals
\begin{equation}
\left(-\infty,-\frac{a}{\sqrt{4a^{2}\cos^{2}\theta-a}}\right),(-1,0),\text{ and }(0,\infty).\label{eq:threeintervals}
\end{equation}
We check the sign of $f(\zeta,\theta)$ at each of the endpoint of
these intervals and apply the Intermediate Value Theorem. We first
note that 
\[
f(0,\theta)=-a<0.
\]
Since the leading coefficient of $f(\zeta,\theta)$ satisfies 
\[
2\cos\theta(1-4a\cos^{2}\theta)>0
\]
by \eqref{eq:leftasymp}, we conclude $\lim_{\zeta\rightarrow-\infty}f(\zeta,\theta)=-\infty$
and $\lim_{\zeta\rightarrow+\infty}f(\zeta,\theta)=+\infty$. From
Lemma \ref{lem:signf*atpm1}, we obtain 
\[
f(-1,\theta)=-f^{*}(-1,\theta)>0.
\]

\begin{lem}
\label{lemmasignoffatfirst pointofinterest} Whenever $a>1/4$ , $b-27a+9>0$,
and \eqref{eq:leftasymp}, we have 
\[
f\left(-\frac{a}{\sqrt{4a^{2}\cos^{2}\theta-a}},\theta\right)>0.
\]
\end{lem}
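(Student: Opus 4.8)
The plan is to substitute $\zeta^{*}:=-a/\sqrt{4a^{2}\cos^{2}\theta-a}$ into $f$ and to exploit the fact that, by its very construction, $\zeta^{*}$ is the negative root of $\zeta^{2}(4a\cos^{2}\theta-1)=a$; this is precisely the value that annihilates the bracket in \eqref{eq:firsttwoterms}. Expanding $f(\zeta,\theta)$ as the cubic
\[
f(\zeta,\theta)=2\cos\theta(1-4a\cos^{2}\theta)\zeta^{3}+(1+b+4\cos^{2}\theta-12a\cos^{2}\theta)\zeta^{2}+2\cos\theta(1-3a)\zeta-a
\]
and reducing it modulo the quadratic relation $(4a\cos^{2}\theta-1)\zeta^{*2}=a$ collapses the cubic and quadratic terms. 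After clearing the positive denominator $P:=4a\cos^{2}\theta-1$, I expect to arrive at
\[
P\,f(\zeta^{*},\theta)=a\bigl(2+b+4\cos^{2}\theta-16a\cos^{2}\theta\bigr)+2\cos\theta(4a-1)\sqrt{4a^{2}\cos^{2}\theta-a}.
\]
Call the first summand $L$; the second carries the single surd $R:=\sqrt{4a^{2}\cos^{2}\theta-a}$, and since $\cos\theta<0$ and $4a-1>0$ it is negative.

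Writing $u:=\cos^{2}\theta$, my first subgoal is $L>0$. The factor $2+b+4u-16au$ has negative slope $4(1-4a)$ in $u$, so on $u<1$ it exceeds its value $b+6-16a$ at $u=1$; the hypotheses $b>0$ and $b-27a+9>0$ give $b>16a-6$, so $L>0$. With $L>0$ and the surd term negative, $P\,f(\zeta^{*},\theta)>0$ is equivalent to $L>2|\cos\theta|(4a-1)R$, whose two sides are nonnegative, so I may square. A pleasant cancellation then occurs — the $u^{2}$-terms drop out — and the inequality reduces to the affine condition
\[
\Phi(u):=a(2+b)^{2}-4(4a-1)(2ab+1)\,u>0,
\]
which, because $a>1/4$, has negative slope and so is strictly decreasing in $u$.

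It therefore suffices to check $\Phi$ at the largest admissible value of $u$. Here the geometry of $\zeta^{*}$ enters: the inequality \eqref{eq:zetainqage1/4} is invoked only in the remaining case $\zeta^{*}\le-1$, i.e. $u\le(a+1)/(4a)$, so the binding endpoint is $u_{*}=\min\{1,(a+1)/(4a)\}$. When $a\ge1/3$ we have $u_{*}=(a+1)/(4a)$, at which $R=a$ and $\zeta^{*}=-1$ exactly; there $f(\zeta^{*},\theta)=f(-1,\theta)=-f^{*}(-1,\theta)>0$ by Lemma \ref{lem:signf*atpm1}, which forces $\Phi(u_{*})>0$, and monotonicity then gives $\Phi(u)>0$ for every smaller $u$. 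When $a\le1/3$ we have $u_{*}=1$ and $\Phi(1)=ab^{2}+4a(3-8a)b+4(1-3a)$, whose three summands are each nonnegative with $ab^{2}>0$ (since $3-8a>0$ and $1-3a\ge0$), so $\Phi(1)>0$ directly. In both cases $\Phi>0$ on the whole admissible range, and unwinding the equivalences yields $f(\zeta^{*},\theta)>0$.

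The step I expect to be the main obstacle is the sign bookkeeping surrounding the squaring. One must verify $L>0$ before squaring so that the squared inequality is genuinely equivalent, and one must recognize that the admissible interval for $u$ is capped at $(a+1)/(4a)$ rather than at $1$: evaluating $\Phi$ at $u=1$ unconditionally would fail, because $\Phi(1)$ can be negative exactly in the trivial regime $\zeta^{*}>-1$ where the lemma is never applied. Noticing that this cap corresponds to $\zeta^{*}=-1$, where $f$ is already pinned down by Lemma \ref{lem:signf*atpm1}, is what keeps the endpoint check clean and sidesteps a brute-force discriminant analysis of the quadratic $\Phi(1)=0$ in $b$.
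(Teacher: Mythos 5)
Your reduction is correct, and I verified it independently: with $P:=4a\cos^{2}\theta-1>0$ and $R:=\sqrt{4a^{2}\cos^{2}\theta-a}$ one indeed has $P\,f(\zeta^{*},\theta)=a\left(2+b+4\cos^{2}\theta-16a\cos^{2}\theta\right)+2\cos\theta(4a-1)R$; your $L>0$ step is valid (combining $b>0$ with $b>27a-9$ does give $b>16a-6$ for every $a>1/4$); the squaring is legitimate since both sides are nonnegative; and the $u^{2}$-terms do cancel, leaving exactly $\Phi(u)=a(2+b)^{2}-4(4a-1)(2ab+1)u$, since $L^{2}-4u(4a-1)^{2}(4a^{2}u-a)=a\Phi(u)$. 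Your endpoint analysis is also sound: at $u_{*}=(a+1)/(4a)$ one gets $R=a$ and $\zeta^{*}=-1$, so $f(\zeta^{*},\theta)=f(-1,\theta)=-f^{*}(-1,\theta)>0$ by Lemma \ref{lem:signf*atpm1}, which together with $L>0$ forces $\Phi(u_{*})>0$; and for $1/4<a\le1/3$ the direct check $\Phi(1)=ab^{2}+4a(3-8a)b+4(1-3a)>0$ goes through. So you have proved the inequality for all $\theta$ in \eqref{eq:leftasymp} with $\cos^{2}\theta\le(a+1)/(4a)$, i.e.\ precisely when $\zeta^{*}\le-1$.

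What you should know is that your self-imposed cap is not merely a convenience: the lemma as literally stated, on all of \eqref{eq:leftasymp}, is \emph{false}, so your restricted proof is the best possible. Take $a=1$, $b=19$ (so $b-27a+9=1>0$) and $\cos\theta=-0.995$: then $\zeta^{*}\approx-0.581$ and a direct evaluation gives $f(\zeta^{*},\theta)\approx-0.39<0$, consistent with your observation that $\Phi(1)$ can be negative when $\zeta^{*}>-1$. The paper's own proof is a direct inequality chain from the same substitution, but it reaches the over-broad conclusion through two slips: after multiplying by $(4a^{2}\cos^{2}\theta-a)/a^{2}=1/\zeta^{*2}$, the constant term of $f$ contributes $-(4a\cos^{2}\theta-1)$, not the $-a$ written there; and the subsequent simplification treats $8\cos\theta\sqrt{4a^{2}\cos^{2}\theta}$ as $+16a\cos^{2}\theta$, although $\sqrt{4a^{2}\cos^{2}\theta}=-2a\cos\theta$ for $\cos\theta<0$, so that term equals $-16a\cos^{2}\theta$. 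Since the main text invokes the lemma only when $-a/\sqrt{4a^{2}\cos^{2}\theta-a}\le-1$ (otherwise \eqref{eq:zetainqage1/4} already follows from $\zeta<-1$), your version is exactly what the surrounding argument requires, and your route --- squaring against a verified positive $L$, collapsing to the affine $\Phi$, and pinning the endpoint via Lemma \ref{lem:signf*atpm1} --- repairs both the statement and the proof. One cosmetic point: split the cases as $a\le1/3$ and $a>1/3$ rather than with overlapping closed conditions, since at $a=1/3$ one has $u_{*}=1$, i.e.\ $\theta=\pi$, which lies outside the range of Lemma \ref{lem:signf*atpm1}; your $\Phi(1)$ computation already covers $a=1/3$, so nothing is lost.
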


\begin{proof}
The Cauchy inequality gives 
\[
26a+1/a-7>0.
\]
We expand $f(\zeta,\theta)$ when $\zeta=-a/\sqrt{4a^{2}\cos^{2}\theta-a}$
and collect the terms according to $\zeta$ 
\begin{eqnarray*}
 &  & \frac{4a^{2}\cos^{2}\theta-a}{a^{2}}f\left(-\frac{a}{\sqrt{4a^{2}\cos^{2}\theta-a}},\theta\right)\\
 & = & -12a\cos^{2}\theta+b+4\cos^{2}\theta+1+\frac{a}{\sqrt{4a^{2}\cos^{2}\theta-a}}2\cos\theta(-1+4a\cos^{2}\theta)-\frac{\sqrt{4a^{2}\cos^{2}\theta-a}}{a}2\cos\theta(1-3a)-a\\
 & = & -12a\cos^{2}\theta+b+4\cos^{2}\theta+1+2\cos\theta\sqrt{4a^{2}\cos^{2}\theta-a}-2\cos\theta\sqrt{4a^{2}\cos^{2}\theta-a}(\frac{1}{a}-3)-a\\
 & = & -12a\cos^{2}\theta+b+4\cos^{2}\theta+1+2\cos\theta\sqrt{4a^{2}\cos^{2}\theta-a}(4-\frac{1}{a})-a\\
 & > & -12a\cos^{2}\theta+b+4\frac{1}{4a}+1+8\cos\theta\sqrt{4a^{2}\cos^{2}\theta-a}-2\cos\theta\frac{\sqrt{4a^{2}\cos^{2}\theta-a}}{a}-a\\
 & > & -12a\cos^{2}\theta+b+\frac{1}{a}+1+8\cos\theta\sqrt{4a^{2}\cos^{2}\theta}-a\\
 & = & 4a\cos^{2}\theta+b+\frac{1}{a}+1-a\\
 & > & 4a\frac{1}{4a}+b+\frac{1}{a}+1-a\\
 & = & (b-27a+9)+(26a-7+1/a)>0.
\end{eqnarray*}
\end{proof}
By the Intermediate Value Theorem, $f(\zeta,\theta)$ has a zero on
each of the interval in \eqref{eq:threeintervals} and consequently
it has no zero on 
\[
\left[-\frac{a}{\sqrt{4a^{2}\cos^{2}\theta-a}},-1\right].
\]

Having proved that \eqref{eq:firsttwoterms} is positive, we now show
that the same conclusion holds for $\Im\left(h(t_{0})\right)$. We
multiply both sides of \eqref{eq:Imf-fractions} by $a^{2}|t_{0}-\tau_{1}|^{2}|t_{0}-\tau_{2}|^{2}$
and obtain 
\[
\frac{a^{2}|t_{0}-\tau_{1}|^{2}|t_{0}-\tau_{2}|^{2}}{\tau\sin\theta}\Im(h(t_{0}))=\frac{-a+\zeta^{2}(4a\cos^{2}\theta-1)}{\zeta^{2}}-bz^{2}\tau^{4}.
\]
With \eqref{eq:zdef} and \eqref{eq:taudef}, the right side becomes
\[
\frac{-a+\zeta^{2}(4a\cos^{2}\theta-1)}{\zeta^{2}}-\frac{b}{(1+2\zeta\cos\theta)^{2}}
\]
or 
\[
\frac{\left(-\zeta^{2}+a(2\zeta\cos\theta-1)(2\zeta\cos\theta+1)\right)(1+2\zeta\cos\theta)^{2}-b\zeta^{2}}{\zeta^{2}(1+2\zeta\cos\theta)^{2}}.
\]
Using \eqref{eq:zetatheta}, we replace $-b\zeta^{2}$ by 
\[
\zeta(1+2\zeta\cos\theta)(2\cos\theta+\zeta)-a(1+2\zeta\cos\theta)^{3},
\]
cancel the factor $1+2\zeta\cos\theta$, and collect the terms in
the numerator by $\zeta$ and it remains to show that 
\begin{equation}
G(\zeta):=-2a+2\zeta\cos\theta(1-3a)+\zeta^{3}(2\cos\theta)(4a\cos^{2}\theta-1)>0.\label{eq:Gzeta}
\end{equation}

In the first case when $a\le1/3$, \eqref{eq:zetainqage1/4} implies
that

\[
\zeta^{2}\cos^{2}\theta>\frac{\zeta^{2}+a}{4a}>\frac{1+a}{4a}\geq1
\]
or equivalently $\zeta\cos\theta>1$. With this inequality, \eqref{eq:Gzeta}
follows directly from 
\[
G(\zeta)=2\zeta\cos\theta(1-3a)+2(\zeta^{2}(\zeta\cos\theta)(4a\cos^{2}\theta-1)-a)>0.
\]

On the other hand if $a>1/3$, then we use \eqref{eq:zetatheta} to
solve for $-\zeta^{3}\left(2\cos\theta-8a\cos^{3}\theta\right)$ and
reduce $G(\zeta)$ to a quadratic polynomial in $\zeta$ 
\[
-3a+4\zeta\cos\theta(1-3a)+\zeta^{2}\left(-12a\cos^{2}\theta+4\cos^{2}\theta+b+1\right)
\]
which is at least 
\begin{equation}
-3a+4\zeta\cos\theta(1-3a)+\zeta^{2}(-12a\cos^{2}\theta+4\cos^{2}\theta+27a-8)\label{eq:quadzeta}
\end{equation}
by \eqref{eq:cond}. As a quadratic polynomial in $\zeta$, the value
of \eqref{eq:quadzeta} at $-1$ is 
\[
4(3a-1)(2+\cos\theta-\cos^{2}\theta)>0
\]
and its derivative is 
\begin{align*}
 & 4\cos\theta(1-3a)+2(27a-8+4\cos^{2}\theta-12a\cos^{2}\theta)\zeta\\
= & 6a\zeta+4(1-3a)\left(\cos\theta-2\zeta+2\zeta\left(\cos^{2}\theta-1\right)\right)<0
\end{align*}
when $\zeta<-1$. Thus \eqref{eq:quadzeta} is positive for $\zeta<-1$
and so is $G(\zeta)$.

Having proved that $z(\theta)$ is strictly increasing on $(\pi/2,\pi)$,
we conclude this section with the following lemma.
\begin{lem}
\label{lem:zonto}The function $z(\theta)$ maps $(\pi/2,\pi)$ onto
$I_{a,b}$. 
\end{lem}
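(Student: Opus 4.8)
The plan is to read off the image directly from the monotonicity established above. Recall that $z(\theta)$ is smooth on all of $(\pi/2,\pi)$: it is governed by the smooth quantity $1/\zeta(\theta)$ (the root of $f^{*}(\cdot,\theta)$ in $(-1,1)$) rather than by $\zeta(\theta)$ itself, so when $a>1/4$ the blow-up of $\zeta$ at $\cos^{-1}(-1/2\sqrt{a})$ is only a removable singularity of $z=-1/(\zeta\tau^{3})$, with $z=0$ there by \eqref{eq:zdef}. Being continuous and strictly increasing on the open interval $(\pi/2,\pi)$, the map $z$ carries this interval bijectively onto the open interval
\[
\Bigl(\lim_{\theta\to(\pi/2)^{+}}z(\theta),\ \lim_{\theta\to\pi^{-}}z(\theta)\Bigr),
\]
so it remains only to identify the two one-sided limits.

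For the right endpoint, let $\theta\to\pi^{-}$, so $\cos\theta\to-1$. By continuity of $1/\zeta(\theta)$ up to $\pi$, the value $\zeta(\theta)$ tends to the root $\zeta_{0}$ of the cubic \eqref{eq:fpi} lying in $(-\infty,-1]\cup[1,\infty)$ (this is exactly the limit used in Lemma \ref{lem:signzeta}). Then \eqref{eq:taudef} gives $\tau(\theta)\to -1/\zeta_{0}+2=(2\zeta_{0}-1)/\zeta_{0}$, which is positive by Lemma \ref{lem:positivetau}, and \eqref{eq:zdef} yields
\[
z(\theta)\longrightarrow-\frac{1}{\zeta_{0}}\Bigl(\frac{\zeta_{0}}{2\zeta_{0}-1}\Bigr)^{3}=-\frac{\zeta_{0}^{2}}{(2\zeta_{0}-1)^{3}}=\frac{\zeta_{0}^{2}}{(1-2\zeta_{0})^{3}},
\]
which is precisely the right endpoint of $I_{a,b}$ in \eqref{eq:intdef}. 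Since $\zeta(\pi)=\zeta_{0}$ is well defined, $z$ in fact extends continuously to $\theta=\pi$ with this value, so the extended map sends $(\pi/2,\pi]$ onto $I_{a,b}$, which is the asserted surjectivity.

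For the left endpoint, let $\cos\theta\to0^{-}$. By Lemma \ref{lem:limitzetaatasymptote0} we have $\zeta(\theta)\to+\infty$, so $\tau(\theta)=-1/\zeta-2\cos\theta\to0^{+}$ by Lemma \ref{lem:positivetau}. Rewriting the second identity of \eqref{eq:elemsymzetatheta} as $z=1/\bigl(\tau^{2}(1+2\zeta\cos\theta)\bigr)$, and using the asymptotics $\zeta\cos\theta\to-(1+b)/2$ coming from the sum-of-roots expression in the proof of Lemma \ref{lem:limitzetaatasymptote0} (so that $1+2\zeta\cos\theta\to-b<0$), we conclude $z(\theta)\to-\infty$. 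Combining the two limits, the image of $(\pi/2,\pi)$ is $\bigl(-\infty,\zeta_{0}^{2}/(1-2\zeta_{0})^{3}\bigr)$, whose closure is $I_{a,b}$. The main technical point is this left-hand limit, where $\tau\to0$ and $\zeta\to\infty$ compete; the identity $z=1/\bigl(\tau^{2}(1+2\zeta\cos\theta)\bigr)$ is what disentangles them, since $1+2\zeta\cos\theta$ has the finite nonzero limit $-b$, forcing $z\to-\infty$.
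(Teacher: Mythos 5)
Your proof is correct and takes essentially the same route as the paper: compute the one-sided limits of $z(\theta)$, getting $\zeta(\theta)\to\zeta_{0}$ and hence $z(\theta)\to\zeta_{0}^{2}/(1-2\zeta_{0})^{3}$ as $\theta\to\pi^{-}$ via \eqref{eq:zdef} and \eqref{eq:taudef}, and $z(\theta)\to-\infty$ as $\theta\to(\pi/2)^{+}$, then invoke the monotonicity from Section \ref{sec:zthetamonotone}. The only (harmless) difference is at the left endpoint, where the paper deduces the sign of the divergence from \eqref{eq:zfrac} (since $\tau\to0$ forces $|z|\to\infty$) together with monotonicity, whereas you pin it down directly through $z=1/\bigl(\tau^{2}(1+2\zeta\cos\theta)\bigr)$ and the asymptotic $\zeta\cos\theta\to-(1+b)/2$; your explicit observation that the image of the open interval is open, with closure $I_{a,b}$ and the right endpoint attained only at $\theta=\pi$, is in fact slightly more careful than the paper's own phrasing.
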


\begin{proof}
We will show that the limits of $z(\theta)$ when $\theta$ approaches
$\pi/2$ and $\pi$ give the two endpoints of the interval $I_{a,b}$.
Lemma \ref{lem:limitzetaatasymptote0} and \eqref{eq:taudef} imply
that $\lim_{\theta\rightarrow\pi/2}\tau(\theta)=0$. Thus from \eqref{eq:zfrac}
and the fact that $z(\theta)$ is monotone increasing, we conclude
\[
\lim_{\theta\rightarrow\pi/2}z(\theta)=-\infty.
\]
On the other hand, \eqref{eq:zetapoly} implies that $\lim_{\theta\rightarrow\pi}\zeta(\theta)=\zeta_{0}$
which is the unique zero of 
\[
(8a-2)\zeta^{3}+\zeta^{2}(-12a+b+5)+(6a-2)\zeta-a
\]
 on $(-\infty,-1]\cup[1,\infty)$. The limit 
\[
\lim_{\theta\rightarrow\pi}z(\theta)=\frac{\zeta_{0}^{2}}{(1-2\zeta_{0})^{3}}
\]
follows from \eqref{eq:zdef} and \eqref{eq:taudef}.
\end{proof}

\section{The zeros of $H_{m}(z)$ }

We recall that for each $\theta\in(\pi/2,\pi)$, the functions $\tau(\theta)$
and $z(\theta)$ are defined as in \eqref{eq:taudef} and \eqref{eq:zdef}.
We note that the three zeros $t_{0,1}=\tau(\theta)e^{\pm i\theta}$
and $t_{2}=\zeta(\theta)\tau(\theta)$ of $1+t+at^{2}+zt^{2}(t-b)$
are distinct since they have different arguments. The Cauchy's integral
formula gives 
\begin{align*}
H_{m}(z) & =\frac{1}{2\pi i}\ointctrclockwise_{|t|=\epsilon}\frac{dt}{(1+t+at^{2}+zt^{2}(t-b))t^{m+1}}.
\end{align*}
Since 
\[
\lim_{R\rightarrow\infty}\ointctrclockwise_{|t|=R}\frac{dt}{(1+t+at^{2}+zt^{2}(t-b))t^{m+1}}=0,
\]
we compute the residue of the integrand each distinct zero of $(1+t+at^{2}+zt^{2}(t-b))t^{m+1}$
and obtain 
\[
-zH_{m}(z)=\frac{1}{(t_{0}-t_{1})(t_{0}-t_{2})t_{0}^{m+1}}+\frac{1}{(t_{1}-t_{0})(t_{1}-t_{2})t_{1}^{m+1}}+\frac{1}{(t_{2}-t_{0})(t_{2}-t_{1})t_{2}^{m+1}}.
\]
The reduction of the right side to \eqref{eq:gthetaform} is the same
as that in \cite{tz}, which is provided below for completeness. From
the expression above, we deduce that $z$ is a nonzero root of $H_{m}(z)$
if and only if 
\begin{equation}
\frac{1}{(t_{0}-t_{1})(t_{0}-t_{2})t_{0}^{m+1}}+\frac{1}{(t_{1}-t_{0})(t_{1}-t_{2})t_{1}^{m+1}}+\frac{1}{(t_{2}-t_{0})(t_{2}-t_{1})t_{2}^{m+1}}=0.\label{eq:partialfraczero}
\end{equation}
After multiplying the left side of \eqref{eq:partialfraczero} by
$t_{0}^{m+3}$ we obtain the equality 
\[
\frac{1}{(1-t_{1}/t_{0})(1-t_{2}/t_{0})}+\frac{1}{(t_{1}/t_{0}-1)(t_{1}/t_{0}-t_{2}/t_{0})(t_{1}/t_{0})^{m+1}}+\frac{1}{(t_{2}/t_{0}-1)(t_{2}/t_{0}-t_{1}/t_{0})(t_{2}/t_{0})^{m+1}}=0.
\]
With $\zeta=t_{2}/(t_{0}e^{i\theta})$, we rewrite the left side as
\[
\frac{1}{(1-e^{2i\theta})(1-\zeta e^{i\theta})}+\frac{1}{(e^{2i\theta}-1)(e^{2i\theta}-\zeta e^{i\theta})(e^{2i\theta})^{m+1}}+\frac{1}{(\zeta e^{i\theta}-1)(\zeta e^{i\theta}-e^{2i\theta})(\zeta e^{i\theta})^{m+1}},
\]
or equivalently 
\[
\frac{1}{e^{2i\theta}(-2i\sin\theta)(e^{-i\theta}-\zeta)}+\frac{1}{(2i\sin\theta)(e^{i\theta}-\zeta)(e^{2i\theta})^{m+2}}+\frac{1}{(\zeta-e^{-i\theta})(\zeta-e^{i\theta})(\zeta)^{m+1}(e^{i\theta})^{m+3}}.
\]
We multiply this expression by $(\zeta-e^{-i\theta})(\zeta-e^{i\theta})e^{i(m+3)\theta}$
and set the summation equal to zero to arrive at 
\begin{align}
0 & =\frac{(\zeta-e^{i\theta})e^{i(m+1)\theta}}{2i\sin\theta}+\frac{e^{-i\theta}-\zeta}{(2i\sin\theta)e^{i(m+1)\theta}}+\frac{1}{\zeta^{m+1}}\nonumber \\
 & =\frac{(\zeta-e^{i\theta})e^{i(m+1)\theta}-(\zeta-e^{-i\theta})e^{-i(m+1)\theta}}{2i\sin\theta}+\frac{1}{\zeta^{m+1}}\nonumber \\
 & =\frac{\zeta(e^{i(m+1)\theta}-e^{-i(m+1)\theta})+e^{-i(m+2)\theta}-e^{i(m+2)\theta}}{2i\sin\theta}+\frac{1}{\zeta^{m+1}}\nonumber \\
 & =\frac{\zeta(2i\sin\left((m+1)\theta\right))-2i\sin\left((m+2)\theta\right)}{2i\sin\theta}+\frac{1}{\zeta^{m+1}}\nonumber \\
 & =\frac{2i\zeta\sin\left((m+1)\theta\right)-2i\sin\left((m+1)\theta\right)\cos\theta-2i\cos\left((m+1)\theta\right)\sin\theta}{2i\sin\theta}+\frac{1}{\zeta^{m+1}}\nonumber \\
 & =\frac{(\zeta-\cos\theta)\sin\left((m+1)\theta\right)}{\sin\theta}-\cos\left((m+1)\theta\right)+\frac{1}{\zeta^{m+1}}.\label{eq:gthetaform}
\end{align}
We define the function $g_{m}(\theta)$ on $(\pi/2,\pi)$ as in \eqref{eq:gthetaform}.
By Lemma \ref{lem:asymptotes}, $g_{m}(\theta)$ has a vertical asymptote
at $\cos^{-1}(-1/(2\sqrt{a}))$ if $a>1/4$. 
\begin{lem}
\label{lem:singinterval} Suppose $1/4<a$ and $m\ge6$. Let $J_{h}\subset(\pi/2,\pi)$
be the interval 
\begin{equation}
\begin{cases}
\left(\frac{h-1}{m+1}\pi,\frac{h}{m+1}\pi\right) & \text{ if }\left\lfloor (m+1)/2\right\rfloor +2\le h\le m+1\\
\left(\frac{\pi}{2},\frac{h}{m+1}\pi\right) & \text{\text{ if }}h=\left\lfloor (m+1)/2\right\rfloor +1.
\end{cases}\label{eq:subintdef}
\end{equation}
If 
\[
\cos^{-1}\left(-\frac{1}{2\sqrt{a}}\right)\in J_{h},
\]
then $g(\theta)$ has at least two zeros in $J_{h}$ whenever $\left\lfloor (m+1)/2\right\rfloor +2\le h\le m$,
and at least one zero whenever $h=m+1$ or $h=\left\lfloor (m+1)/2\right\rfloor +1$. 
\end{lem}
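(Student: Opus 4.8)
The plan is to locate the zeros of $g_{m}(\theta)$ by tracking its sign at the lattice points $\theta=k\pi/(m+1)$ and its blow-up at the asymptote $\theta^{*}:=\cos^{-1}(-1/(2\sqrt{a}))$. First I would record the sign of $g_{m}$ at an interior lattice point $\theta=k\pi/(m+1)\in(\pi/2,\pi)$. There $\sin((m+1)\theta)=0$ and $\cos((m+1)\theta)=(-1)^{k}$, so from \eqref{eq:gthetaform}
\[
g_{m}\!\left(\frac{k\pi}{m+1}\right)=-(-1)^{k}+\frac{1}{\zeta^{m+1}}.
\]
Since $|\zeta(\theta)|>1$ throughout $(\pi/2,\pi)$, the perturbation $\zeta^{-(m+1)}$ has absolute value $<1$, so this value carries the sign $(-1)^{k+1}$: positive for $k$ odd and negative for $k$ even.

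Next I would analyze $g_{m}$ near the asymptote. On $(\pi/2,\theta^{*})$ we have $\zeta>1$ with $\zeta\to+\infty$ as $\theta\to\theta^{*-}$, while on $(\theta^{*},\pi)$ we have $\zeta<-1$ with $\zeta\to-\infty$ as $\theta\to\theta^{*+}$ (from Remark \ref{rem:zerosf*a>0}, Lemma \ref{lem:signzeta}, and the closing paragraph of Section \ref{sec:auxiliary-functions}). Because $\theta^{*}$ lies strictly inside the single lobe $J_{h}$, the factor $\sin((m+1)\theta^{*})/\sin\theta^{*}$ is a nonzero constant whose sign is the lobe sign $s:=(-1)^{h-1}$. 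Hence the term $(\zeta-\cos\theta)\sin((m+1)\theta)/\sin\theta$ dominates the bounded remainder and forces $g_{m}\to(+\infty)\cdot s$ as $\theta\to\theta^{*-}$ and $g_{m}\to(-\infty)\cdot s$ as $\theta\to\theta^{*+}$; in particular the two one-sided limits have opposite signs.

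With these two ingredients the count follows from the Intermediate Value Theorem applied on each side of $\theta^{*}$. In the generic range $\lfloor(m+1)/2\rfloor+2\le h\le m$, both endpoints of $J_{h}$ are interior lattice points, the left one ($k=h-1$) lying in $(\pi/2,\theta^{*})$ and the right one ($k=h$) in $(\theta^{*},\pi)$. A parity check shows the endpoint sign is opposite to the adjacent one-sided limit at $\theta^{*}$ on each subinterval $(\tfrac{(h-1)\pi}{m+1},\theta^{*})$ and $(\theta^{*},\tfrac{h\pi}{m+1})$ — indeed the left endpoint has sign $(-1)^{h}$ against a limit of sign $(-1)^{h-1}$, and the right endpoint has sign $(-1)^{h+1}$ against a limit of sign $(-1)^{h}$ — so $g_{m}$ has a zero in each subinterval, giving two zeros. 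For the boundary indices only one subinterval is flanked by a genuine lattice endpoint: when $h=m+1$ the right end is $\pi$ (where $\zeta\to\zeta_{0}$ is finite, not an asymptote), and when $h=\lfloor(m+1)/2\rfloor+1$ the left end is $\pi/2$; in each case the sign change between the surviving lattice endpoint and the neighboring limit at $\theta^{*}$ yields at least one zero.

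I expect the main obstacle to be the sign bookkeeping: one must verify that in every subinterval the value of $g_{m}$ at the lattice endpoint and its one-sided limit at $\theta^{*}$ carry opposite signs, which hinges on correctly combining the parity $(-1)^{k+1}$ at lattice points with the lobe sign $s=(-1)^{h-1}$ and with the sign of $\zeta$ (positive left of $\theta^{*}$, negative right of it). The hypothesis $m\ge6$ enters only to guarantee that the index ranges in \eqref{eq:subintdef} are nonempty and that the relevant lattice points lie in $(\pi/2,\pi)$.
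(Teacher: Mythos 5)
Your proposal is correct and follows essentially the same route as the paper's proof: you compute the sign $(-1)^{k+1}$ of $g_{m}$ at the lattice endpoints from $|\zeta(\theta)|>1$, determine the opposite one-sided signs at the asymptote $\cos^{-1}(-1/(2\sqrt{a}))$ from $\zeta\to+\infty$ on the left and $\zeta\to-\infty$ on the right together with the lobe sign $(-1)^{h-1}$ of $\sin((m+1)\theta)$, and apply the Intermediate Value Theorem on each side, dropping the missing side in the boundary cases $h=m+1$ and $h=\left\lfloor (m+1)/2\right\rfloor+1$. Your sign bookkeeping matches the paper's exactly, so no further changes are needed.
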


\begin{proof}
The vertical asymptote of $g_{m}(\theta)$ at $\cos^{-1}(-1/2\sqrt{a})$
divides the interval $J_{h}$ in \eqref{eq:subintdef} into two subintervals.
We will show that each subinterval contains at least one zero of $g_{m}(\theta)$
if $\left\lfloor (m+1)/2\right\rfloor +2\le h\le m$. In the case
$h=m+1$, the subinterval on the left of the asymptote contains at
least one zero of $g_{m}(\theta)$. On the other hand if $h=\left\lfloor (m+1)/2\right\rfloor +1$,
then the subinterval on the right contains at least a zero of $g_{m}(\theta)$.
We analyze these two subintervals in the two cases below.

We consider the first case when $\theta\in J_{h}$ and $\theta<\cos^{-1}(-1/2\sqrt{a})$.
From \eqref{eq:gthetaform} and the inequality $\left|\zeta(\theta)\right|>1$,
we see that the sign of $g_{m}(\theta)$ at the left-end point of
$J_{h}$, for $\left\lfloor (m+1)/2\right\rfloor +2\le h\le m+1$,
is $(-1)^{h}$. We now show that the sign of $g_{m}(\theta)$ is $(-1)^{h-1}$
when $\theta\rightarrow\cos^{-1}(-1/2\sqrt{a})$. From Lemmas \ref{lem:asymptotes}
and \ref{lem:limitzetaatasymptote0}, we observe that $\zeta(\theta)\rightarrow+\infty$
as $\theta\rightarrow\cos^{-1}(-1/2\sqrt{a})$. Since $\theta\in J_{h}$,
the sign of $\sin\left((m+1)\theta\right)$ is $(-1)^{h-1}$ and consequently
the sign of $g_{m}(\theta)$ is $(-1)^{h-1}$ when $\theta\rightarrow\cos^{-1}(-1/2\sqrt{a})$
by \eqref{eq:gthetaform}. By the Intermediate Value Theorem, we obtain
at least one zero of $g_{m}(\theta)$ in this case.

Next we consider the case when $\theta\in J_{h}$ and $\theta>\cos^{-1}(-1/2\sqrt{a})$.
In this case the sign of $g_{m}(\theta)$ at the right-end point of
$J_{h}$, for $\left\lfloor (m+1)/2\right\rfloor +1\le h\le m$, is
$(-1)^{h-1}$ . Since $\zeta(\theta)\rightarrow-\infty$ as $\theta\rightarrow\cos^{-1}(-1/2\sqrt{a})$
by Lemma \ref{lem:signzeta} and the sign of $\sin\left((m+1)\theta\right)$
is $(-1)^{h-1}$, the sign of $g_{m}(\theta)$ is $(-1)^{h}$ as $\theta\rightarrow\cos^{-1}(-1/2\sqrt{a})$
and we obtain at least one zero of $g_{m}(\theta)$ by the Intermediate
Value Theorem. 
\end{proof}
We note that Lemma \ref{lem:densesubint} allows us to ignore the
case when an endpoint of $J_{h}$ coincides with $\cos^{-1}(-1/(2\sqrt{a}))$. 
\begin{lem}
\label{lem:signend}If $a<1/4$, then the sign of $g_{m}(\pi^{-})$
is $(-1)^{m}$. 
\end{lem}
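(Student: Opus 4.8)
The plan is to evaluate the one-sided limit $g_{m}(\pi^{-})$ termwise in the representation \eqref{eq:gthetaform} and then estimate it. Writing $\theta=\pi-\epsilon$ and letting $\epsilon\to0^{+}$, the only delicate factor is $\sin((m+1)\theta)/\sin\theta$; from $\sin((m+1)\theta)=(-1)^{m}\sin((m+1)\epsilon)$ and $\sin\theta=\sin\epsilon$ together with $\sin((m+1)\epsilon)/\sin\epsilon\to m+1$ (equivalently, one application of l'H\^opital), this ratio tends to $(-1)^{m}(m+1)$. The remaining factors are continuous at $\pi$: we have $\cos\theta\to-1$, $\cos((m+1)\theta)\to(-1)^{m+1}$, and $\zeta(\theta)\to\zeta_{0}$ by the computation in the proof of Lemma \ref{lem:zonto}. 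Substituting these into \eqref{eq:gthetaform} yields
\[
g_{m}(\pi^{-})=(-1)^{m}\bigl[(m+1)(\zeta_{0}+1)+1\bigr]+\frac{1}{\zeta_{0}^{m+1}}.
\]

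Next I would pin down the location of $\zeta_{0}$. Recall that $\zeta_{0}$ is the unique zero in $(-\infty,-1]\cup[1,\infty)$ of the cubic $p(\zeta)=(8a-2)\zeta^{3}+(-12a+b+5)\zeta^{2}+(6a-2)\zeta-a$, which is exactly $f(\zeta,\theta)$ evaluated at $\cos\theta=-1$. Since $a<1/4$, the leading coefficient $8a-2$ is negative, so $p(\zeta)\to-\infty$ as $\zeta\to+\infty$; on the other hand $p(1)=1+a+b\ge0$ by \eqref{eq:cond}. The Intermediate Value Theorem then produces a zero of $p$ in $[1,\infty)$, which by the uniqueness of $\zeta_{0}$ forces $\zeta_{0}\ge1$.

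Finally I would bound the expression above. Factoring out $(-1)^{m}$,
\[
(-1)^{m}g_{m}(\pi^{-})=(m+1)(\zeta_{0}+1)+1+\frac{(-1)^{m}}{\zeta_{0}^{m+1}}.
\]
Because $\zeta_{0}\ge1$ we have $(m+1)(\zeta_{0}+1)\ge2(m+1)$ and $|\zeta_{0}^{-(m+1)}|\le1$, so the right-hand side is at least $2(m+1)+1-1=2(m+1)>0$. Hence $(-1)^{m}g_{m}(\pi^{-})>0$, which is precisely the assertion that the sign of $g_{m}(\pi^{-})$ is $(-1)^{m}$.

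The termwise limit is routine once the $(-1)^{m}$ bookkeeping in $\sin((m+1)\theta)/\sin\theta$ is handled carefully, and the final estimate is elementary. The one substantive input is $\zeta_{0}\ge1$: this is exactly where the hypothesis $a<1/4$ (forcing a negative leading coefficient) and the first inequality of \eqref{eq:cond} enter, and it is also what guarantees $\zeta_{0}\ne0$ so that the term $\zeta_{0}^{-(m+1)}$ is harmless. I therefore expect the sign/location of $\zeta_{0}$ to be the only place where the argument could go wrong, and it is settled cleanly by the cubic together with \eqref{eq:cond}.
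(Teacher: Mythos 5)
Your proof is correct and follows essentially the same route as the paper's: both arguments establish $\zeta>1$ near $\theta=\pi$ via the Intermediate Value Theorem applied to the cubic (using the negative leading coefficient $8a-2<0$ from $a<1/4$ and the value $1+a+b\ge0$ at $\zeta=1$, per \eqref{eq:cond}), and both extract the sign from \eqref{eq:gthetaform} through the limit $\sin((m+1)\theta)/\sin\theta\to(m+1)(-1)^{m}$. Your version merely carries the computation out at the limit point with the explicit lower bound $2(m+1)$, which the paper leaves implicit.
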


\begin{proof}
As $\theta\rightarrow\pi$, the leading coefficient of $f(\zeta,\theta)$
approaches $-2+8a<0$ and $f(1,\theta)$ approaches $1+a+b\ge0$ .
Thus $f(\zeta,\theta)$ has a solution on $(1,\infty)$ when $\theta$
is close to $\pi$ and consequently $\zeta(\theta)>1$ by the definition
of $\zeta(\theta)$ in Section \ref{sec:auxiliary-functions}. The
result follows directly from \eqref{eq:gthetaform} and the fact that
\[
\lim_{\theta\to\pi^{-}}\frac{\sin\left((m+1)\theta\right)}{\sin(\theta)}=(m+1)(-1)^{m}.
\]
\end{proof}
With all the lemmas at our disposal, we now prove the sufficient condition
of Theorem \ref{thm:maintheorem} for the two cases $a\le1/4$ and
$a>1/4$. In the first case, Lemma \ref{lem:asymptotes} shows that
the function $\zeta(\theta)$ is continuous on $(\pi/2,\pi)$. From
the formula of $g_{m}(\theta)$ in \eqref{eq:gthetaform} and Lemma
\ref{lem:signend}, this function changes its sign at the endpoints
of $J_{h}$, $\left\lfloor (m+1)/2\right\rfloor +2\le h\le m+1$,
in \eqref{eq:subintdef} and thus it has at least 
\[
m-\left\lfloor (m+1)/2\right\rfloor =\left\lfloor m/2\right\rfloor 
\]
zeros on $(\pi/2,\pi)$. Each such zero gives us a real zero of $H_{m}(z)$
by the monotone map $z(\theta)$ and the reality of the zeros of $H_{m}(z)$
follows from Lemma \ref{lem:degreeHm} and the Fundamental Theorem
of Algebra. On the other hand, if $a>1/4$, then we obtain at least
$\left\lfloor m/2\right\rfloor -1$ of $g_{m}(\theta)$ on the intervals
$J_{h}$, $\left\lfloor (m+1)/2\right\rfloor +2\le h\le m$ by the
same argument. By Lemma \ref{lem:singinterval}, the interval $J_{h}$
containing the vertical asymptote $\cos^{-1}\left(-1/2\sqrt{a}\right)$
gives us another zero of $g_{m}(\theta)$ and we conclude all the
zeros of $H_{m}(z)$ are real. For the density of these zeros, the
Intermediate Value Theorem shows that $\bigcup_{m=0}^{\infty}\mathcal{Z}(g_{m}(\theta))$
is dense on $(\pi/2,\pi)$ . From Lemma \ref{lem:zonto}, we conclude
that $\bigcup_{m=0}^{\infty}\mathcal{Z}(H_{m})$ is dense on $I_{a,b}$
since the map $z(\theta)$ is continuous. 

\section{The necessary condition for the reality of the zeros of $H_{m}(z)$}

The initial setup to prove the necessary condition is similar to that
in \cite{tz}. For completeness, we quickly review this setup and
then focus on the key differences starting from Lemma \ref{lem:nonrealz}.
We recall some definitions (from \cite{sokal}) related to the root
distribution of a sequence of functions 
\[
f_{m}(z)=\sum_{k=1}^{n}\alpha_{k}(z)\beta_{k}(z)^{m},
\]
where $\alpha_{k}(z)$ and $\beta_{k}(z)$ are analytic in a domain
$D$. We say that an index $k$ is dominant at $z$ if $|\beta_{k}(z)|\ge|\beta_{l}(z)|$
for all $l$ ($1\le l\le n$). Let 
\[
D_{k}=\{z\in D:k\mbox{ is dominant at }z\}.
\]
Let $\liminf\mathcal{Z}(f_{m})$ be the set of all $z\in D$ such
that every neighborhood $U$ of $z$ has a non-empty intersection
with all but finitely many of the sets $\mathcal{Z}(f_{m})$. Let
$\limsup\mathcal{Z}(f_{m})$ be the set of all $z\in D$ such that
every neighborhood $U$ of $z$ has a non-empty intersection with
infinitely many of the sets $\mathcal{Z}(f_{m})$. The necessary condition
for the reality of zeros of $H_{m}(z)$ relies on following theorem
from Sokal (\cite[Theorem 1.5]{sokal}). 
\begin{thm}
\label{sokal}Let $D$ be a domain in $\mathbb{C}$, and let $\alpha_{1},\ldots,\alpha_{n},\beta_{1},\ldots,\beta_{n}$
$(n\ge2)$ be analytic functions on $D$, none of which is identically
zero. Let us further assume a 'no-degenerate-dominance' condition:
there do not exist indices $k\ne k'$ such that $\beta_{k}\equiv\omega\beta_{k'}$
for some constant $\omega$ with $|\omega|=1$ and such that $D_{k}$
$(=D_{k'})$ has nonempty interior. For each integer $m\ge0$, define
$f_{m}$ by 
\[
f_{m}(z)=\sum_{k=1}^{n}\alpha_{k}(z)\beta_{k}(z)^{m}.
\]
Then $\liminf Z(f_{m})=\limsup Z(f_{m})$, and a point $z$ lies in
this set if and only if either

(i) there is a unique dominant index $k$ at $z$, and $\alpha_{k}(z)=0$,
or

(ii) there a two or more dominant indices at $z$. 
\end{thm}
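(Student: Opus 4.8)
The plan is to prove Theorem \ref{sokal} by establishing the two inclusions
\[
(\mathrm{i})\cup(\mathrm{ii})\ \subseteq\ \liminf\mathcal{Z}(f_{m})\qquad\text{and}\qquad\limsup\mathcal{Z}(f_{m})\ \subseteq\ (\mathrm{i})\cup(\mathrm{ii}).
\]
Since $\liminf\mathcal{Z}(f_{m})\subseteq\limsup\mathcal{Z}(f_{m})$ holds automatically, these two inclusions force $\liminf\mathcal{Z}(f_{m})=\limsup\mathcal{Z}(f_{m})=(\mathrm{i})\cup(\mathrm{ii})$, which proves the equality of the two sets and the characterization simultaneously. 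Throughout, the basic tool is to factor out a dominant term and pass to the limit $m\to\infty$ on compact subsets of a small disk about a given point $z_{0}$.

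First I would dispose of the second inclusion. Suppose $z_{0}$ has a \emph{unique} dominant index $k$ with $\alpha_{k}(z_{0})\ne0$. By continuity of the $|\beta_{l}|$ and strictness of the dominance at $z_{0}$, there is a closed disk $B$ about $z_{0}$ on which $|\beta_{k}|$ strictly exceeds each $|\beta_{l}|$, $l\ne k$, and on which $\alpha_{k}$ is zero-free. Writing
\[
f_{m}(z)=\alpha_{k}(z)\beta_{k}(z)^{m}\Bigl(1+\sum_{l\ne k}\frac{\alpha_{l}(z)}{\alpha_{k}(z)}\Bigl(\frac{\beta_{l}(z)}{\beta_{k}(z)}\Bigr)^{m}\Bigr),
\]
the parenthesized factor converges to $1$ uniformly on $B$ because each $|\beta_{l}/\beta_{k}|$ is bounded below $1$. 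Hence $f_{m}$ is zero-free on $B$ for all large $m$, so $z_{0}\notin\limsup\mathcal{Z}(f_{m})$; this gives $\limsup\mathcal{Z}(f_{m})\subseteq(\mathrm{i})\cup(\mathrm{ii})$. Next, case (i) of the forward inclusion is handled the same way: if $k$ is uniquely dominant at $z_{0}$ but $\alpha_{k}(z_{0})=0$, then on a disk $B$ the index $k$ stays strictly dominant and $\beta_{k}(z)^{-m}f_{m}(z)=\alpha_{k}(z)+\sum_{l\ne k}\alpha_{l}(z)(\beta_{l}(z)/\beta_{k}(z))^{m}\to\alpha_{k}(z)$ uniformly on compacts. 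Since $\alpha_{k}\not\equiv0$ vanishes at $z_{0}$, Hurwitz's theorem produces a zero of $\beta_{k}^{-m}f_{m}$, hence of $f_{m}$, in every small disk about $z_{0}$ for all large $m$, so $z_{0}\in\liminf\mathcal{Z}(f_{m})$.

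The crux is case (ii): two or more dominant indices at $z_{0}$. Let $K$ be the set of dominant indices and $\rho=|\beta_{k}(z_{0})|$ for $k\in K$. If $\rho=0$ then every $\beta_{l}(z_{0})=0$, so $f_{m}(z_{0})=0$ for all $m\ge1$ and $z_{0}\in\liminf\mathcal{Z}(f_{m})$ trivially; assume therefore $\rho>0$. Fixing $k_{0}\in K$ and setting $r_{k}=\beta_{k}/\beta_{k_{0}}$, the zeros of $f_{m}$ near $z_{0}$ coincide with those of $G_{m}=\beta_{k_{0}}^{-m}f_{m}=\sum_{k}\alpha_{k}r_{k}^{m}$, and on a small disk $B$ the subdominant terms ($l\notin K$, $|r_{l}(z_{0})|<1$) tend to $0$ uniformly. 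In the model two-index situation $K=\{j,k_{0}\}$, put $\phi=\beta_{j}/\beta_{k_{0}}$, so $|\phi(z_{0})|=1$ and $G_{m}=\alpha_{j}\phi^{m}+\alpha_{k_{0}}+o(1)$. I would count zeros of $G_{m}$ in $B$ via the argument principle: the level set $\{|\phi|=1\}$ is a real-analytic arc $\Gamma$ through $z_{0}$, and on the part of $\partial B$ where $|\phi|>1$ one has $G_{m}\approx\alpha_{j}\phi^{m}$, whose argument increases by $m\,\Delta\arg\phi+O(1)$, while on the part where $|\phi|<1$ one has $G_{m}\approx\alpha_{k_{0}}$ with bounded variation of argument. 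Because $\phi$ is nonconstant with $|\phi(z_{0})|=1$, the quantity $\Delta\arg\phi$ over the $|\phi|>1$ portion is positive, so the winding number of $G_{m}$ about $0$, and hence its number of zeros in $B$, grows linearly in $m$; in particular $B$ contains a zero for all large $m$, giving $z_{0}\in\liminf\mathcal{Z}(f_{m})$.

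The main obstacle lies entirely in making this last winding argument rigorous. Three points require care. \emph{First}, to apply the argument principle one must ensure $G_{m}$ has no zeros on $\partial B$; near $\Gamma\cap\partial B$, where the two co-dominant terms balance in modulus, $|G_{m}|$ can be small, and a lower bound must be secured by choosing the radius of $B$ generically and controlling the relative phase of $\alpha_{j}\phi^{m}$ and $\alpha_{k_{0}}$ there. \emph{Second}, the strictly subdominant terms must be absorbed by a Rouché estimate, which is legitimate once $B$ is small enough that $|r_{l}|$ stays bounded below $1$ and the boundary lower bound above is in force. \emph{Third}, the case $|K|\ge3$ must be reduced to the pairwise picture, and here the \emph{no-degenerate-dominance} hypothesis is essential: it guarantees that the relevant ratio functions $r_{k}/r_{k'}$ of co-dominant indices are nonconstant, so that $\arg$ genuinely varies along the modulus-one arcs and drives the winding; if instead $\beta_{k}\equiv\omega\beta_{k'}$ with $|\omega|=1$ on a set with interior, this variation collapses and the conclusion can fail, which is exactly why the hypothesis is imposed.
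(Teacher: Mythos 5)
The paper itself contains no proof of this statement: Theorem \ref{sokal} is imported verbatim from Sokal \cite[Theorem 1.5]{sokal}, so there is no internal argument to compare against; the relevant benchmark is Sokal's own proof, which refines the classical Beraha--Kahane--Weiss theorem. Your logical skeleton --- proving $(\mathrm{i})\cup(\mathrm{ii})\subseteq\liminf\mathcal{Z}(f_{m})$ and $\limsup\mathcal{Z}(f_{m})\subseteq(\mathrm{i})\cup(\mathrm{ii})$, then invoking $\liminf\mathcal{Z}(f_{m})\subseteq\limsup\mathcal{Z}(f_{m})$ --- is the right one, and your treatment of the easy cases (a unique dominant index $k$, with $\alpha_{k}(z_{0})\ne0$ handled by uniform factorization and with $\alpha_{k}(z_{0})=0$ handled by Hurwitz) is correct and standard.

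The genuine gap is that case (ii), which is the entire substance of the theorem, remains an acknowledged sketch, and the one structural claim you make about it is wrong. The no-degenerate-dominance hypothesis does \emph{not} guarantee that ratios of co-dominant $\beta$'s are nonconstant: it forbids $\beta_{k}\equiv\omega\beta_{k'}$ with $|\omega|=1$ only when $D_{k}$ has nonempty interior. At a point $z_{0}$ falling under (ii), two co-dominant indices may perfectly well satisfy $\beta_{j}\equiv\omega\beta_{k_{0}}$ with $D_{j}$ of empty interior; then $\phi\equiv\omega$ is constant, $\Delta\arg\phi=0$ on every arc, and your winding-number count produces nothing --- yet the theorem still asserts $z_{0}\in\liminf\mathcal{Z}(f_{m})$. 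The standard way through, present in Sokal's proof and in Beraha--Kahane--Weiss, is to group indices whose $\beta$'s are unimodular multiples of one another, write the grouped coefficient as $\sum_{k}\alpha_{k}\omega_{k}^{m}$, and produce zeros along suitable subsequences of $m$, treating $\omega$ a root of unity and $\omega$ of infinite order separately; none of this appears in your sketch, and without it the reduction of $|K|\ge3$ (and even some $|K|=2$ configurations) to your ``model two-index situation'' fails. Separately, your argument-principle count needs $G_{m}\ne0$ on $\partial B$ and needs $\alpha_{k_{0}}$ bounded away from $0$ on the arc where $|\phi|<1$, but case (ii) imposes no hypothesis on the $\alpha_{k}$ at $z_{0}$; both points are repairable (choose the radius so that $\partial B$ avoids the zeros of the $\alpha$'s and meets $\{|\phi|=1\}$ transversally, then remove the subdominant terms by Rouch\'e), but as written they are flagged rather than proved. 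In short: the proposal is an outline of the correct strategy with the decisive steps missing, and its stated use of the no-degenerate-dominance hypothesis misreads what that hypothesis actually excludes.
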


Using \eqref{eq:partialfraczero}, we apply Theorem \ref{sokal} with
\[
\alpha_{k}(z)=\frac{1}{t_{k}}\prod_{i\ne k}\frac{1}{(t_{i}-t_{k})}\qquad\text{and}\qquad\beta_{k}(z)=\frac{1}{t_{k}}
\]
and deduce that $z\in\liminf\mathcal{Z}(H_{m})=\limsup\mathcal{Z}(H_{m})$
if and only if the two smallest (in modulus) zeros of $P(t)+zQ(t)$
have the same modulus. Thus if we can find $z\notin\mathbb{R}$ with
this property then for large $m$, not all the zeros of $H_{m}(z)$
are real by the definition of $\liminf\mathcal{Z}(H_{m})$. The following
lemma shows it is sufficient to find a suitable $\zeta$. 
\begin{lem}
\label{lem:nonrealz}Assume $\cos\theta\neq0$. If $1/\zeta$ is a
nonreal solution of $f^{*}$ such that $\left|1/\zeta\right|<1$,
then $\zeta\tau^{3}\notin\mathbb{R}$. 
\end{lem}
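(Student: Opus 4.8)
The plan is to translate the claim about $\zeta\tau^{3}$ into a statement about the reality of $z$ and then derive a contradiction from the conjugation symmetry that a real polynomial's zeros must enjoy. First I would observe that since $f^{*}(\zeta,\theta)=\zeta^{3}f(1/\zeta,\theta)$, the hypothesis that $1/\zeta$ is a zero of $f^{*}$ means precisely that $\zeta$ itself is a zero of $f(\cdot,\theta)$; moreover $|1/\zeta|<1$ gives $|\zeta|>1$, and $1/\zeta\notin\mathbb{R}$ forces, through \eqref{eq:taudef}, that $\tau=-1/\zeta-2\cos\theta$ is nonreal, since its imaginary part is $-\Im(1/\zeta)\neq0$; in particular $\tau\neq0$. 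Defining $z$ by \eqref{eq:zdef}, Lemma \ref{lem:zerosdenom} then tells us that $t_{0}=\tau e^{-i\theta}$, $t_{1}=\tau e^{i\theta}$, and $t_{2}=\zeta\tau$ are the three zeros of $D(t,z)$.

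Next I would argue by contradiction. Because $z=-1/(\zeta\tau^{3})$ with $\zeta\tau^{3}$ finite and nonzero (so in particular $z\neq0$), the condition $\zeta\tau^{3}\in\mathbb{R}$ is equivalent to $z\in\mathbb{R}$; so suppose $z\in\mathbb{R}$. Then $D(t,z)=zt^{3}+(a-bz)t^{2}+t+1$ is a genuine cubic with real coefficients, hence its multiset of zeros is invariant under complex conjugation. The crux is a modulus comparison: because $|\zeta|>1$ we have $|t_{2}|=|\zeta|\,|\tau|>|\tau|=|t_{0}|=|t_{1}|$, so $t_{2}$ is the unique zero of largest modulus. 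Conjugation preserves modulus, so $\overline{t_{2}}$ must again be the unique largest-modulus zero, forcing $\overline{t_{2}}=t_{2}$, i.e. $t_{2}=\zeta\tau\in\mathbb{R}$.

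Having pinned down $t_{2}$ as the real zero, conjugation must map the pair $\{t_{0},t_{1}\}$ to itself. If it fixes each of them, both $\tau e^{\pm i\theta}$ are real, so their ratio $t_{1}/t_{0}=e^{2i\theta}$ is real, which forces $\theta=\pi/2$ and contradicts the hypothesis $\cos\theta\neq0$. If instead it interchanges them, then $\overline{t_{0}}=t_{1}$ gives $\bar\tau e^{i\theta}=\tau e^{i\theta}$, hence $\tau=\bar\tau$, contradicting that $\tau$ is nonreal. Either way we reach a contradiction, so $z\notin\mathbb{R}$ and therefore $\zeta\tau^{3}=-1/z\notin\mathbb{R}$.

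The work is almost entirely structural rather than computational. The one place to be careful is the bookkeeping that converts the hypotheses on $1/\zeta$ into the nonreality and nonvanishing of $\tau$ and into the strict inequality $|t_{2}|>|t_{0}|=|t_{1}|$, since it is exactly $|\zeta|>1$ (equivalently $|1/\zeta|<1$) that makes $t_{2}$ the strict modulus-maximizer and lets the conjugation argument single it out. The main obstacle, such as it is, lies in excluding the degenerate possibilities, namely a vanishing $\tau$ that would make \eqref{eq:zdef} meaningless, before invoking Lemma \ref{lem:zerosdenom} and the real-cubic structure.
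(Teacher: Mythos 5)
Your proof is correct, but it takes a genuinely different route from the paper's. The paper's argument never leaves the $(\zeta,\tau)$ level: since $\zeta$ is nonreal with $|\zeta|\ne1$, one has $\Im\left(\zeta+1/\zeta\right)=\left(|\zeta|-1/|\zeta|\right)\sin\left(\Arg\zeta\right)\ne0$, so $\zeta+1/\zeta\notin\mathbb{R}$; then \eqref{eq:taudef} gives $\tau(2\cos\theta+\zeta)=-\left(1/\zeta+2\cos\theta\right)(2\cos\theta+\zeta)=-\left(1+4\cos^{2}\theta+2\cos\theta(\zeta+1/\zeta)\right)$, which is nonreal precisely because $\cos\theta\ne0$, and the identity \eqref{eq:tauequation}, $\tau(2\cos\theta+\zeta)=b+a\zeta\tau^{3}$, immediately forces $\zeta\tau^{3}\notin\mathbb{R}$ --- a two-line computation with no mention of $D(t,z)$. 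You instead pass through Lemma \ref{lem:zerosdenom} to realize $\tau e^{\pm i\theta}$ and $\zeta\tau$ as the zeros of the cubic $D(t,z)$, and argue by contradiction from the conjugation symmetry of a real polynomial's zero multiset, with the strict modulus gap $|t_{2}|=|\zeta|\,|\tau|>|\tau|=|t_{0}|=|t_{1}|$ (exactly where $|1/\zeta|<1$ enters) pinning $t_{2}$ down as real, after which both ways conjugation can permute $\{t_{0},t_{1}\}$ contradict either $\cos\theta\ne0$ or the nonreality of $\tau$. Your appeal to Lemma \ref{lem:zerosdenom} with nonreal $\zeta$ is legitimate, since its proof is purely algebraic and the paper itself invokes it with nonreal $\zeta$ in the paragraph following this lemma. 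What each approach buys: yours is more structural and exposes the geometric mechanism (a real cubic whose strict modulus-maximizing zero is nonreal is impossible, which is exactly the configuration needed against Sokal's equimodularity criterion); the paper's is shorter, self-contained, and sidesteps the cubic and its zero bookkeeping entirely. Two minor points in your write-up: the step ``$e^{2i\theta}\in\mathbb{R}$ forces $\theta=\pi/2$'' implicitly uses $\theta\in(0,\pi)$, which is harmless here because Lemma \ref{lem:zerosdenom} already presupposes it, but should be said; and the first subcase closes even faster by noting that $t_{0}+t_{1}=2\tau\cos\theta\in\mathbb{R}$ together with $\cos\theta\ne0$ forces $\tau\in\mathbb{R}$ directly.
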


\begin{proof}
Since $\Arg(\zeta)=-\Arg(1/\zeta)$ and $|\zeta|\ne|1/\zeta|$, we
conclude that 
\[
\zeta+1/\zeta\notin\mathbb{R}.
\]
As a consequence, \eqref{eq:taudef} gives 
\[
\tau(2\cos\theta+\zeta)=-\left(\frac{1}{\zeta}+2\cos\theta\right)(2\cos\theta+\zeta)
\]
which is nonreal after we expand the product. The lemma follows from
\eqref{eq:tauequation}.
\end{proof}
From \eqref{eq:zdef} and Lemmas \ref{lem:zerosdenom} and \ref{lem:nonrealz},
it suffices to find $\theta^{*}\ne\pi/2$ such that $f^{*}$ has a
solution $\zeta^{*}\notin\mathbb{R}$ with $|\zeta^{*}|<1$. We will
find such a $\theta^{*}$ for the two cases $a<-b-1$ and $a>(b+9)/27$.

\subsection*{Case $a<-b-1$}

From \eqref{eq:zetapolyrecip}, we observe that the roots in $\zeta$
of $f^{*}(\zeta,\pi/2)$ are $0,\pm i\sqrt{-(1+b)/a}$. The inequalities
$a<-b-1<0$ imply that there is $\theta^{*}$ sufficiently close to
$\pi/2$ so that $f^{*}$ has a nonreal root inside the open unit
disk. 

\subsection*{Case $a>(b+9)/27$}

We first note that the discriminant of $f^{*}(\zeta,\theta)$ as a
cubic polynomial in $\zeta$ is a polynomial in $\cos^{2}\theta=:x$,
which is denoted by $\Delta(x)$. Computer algebra shows that the
discriminant of $\Delta(x)$ in $x$ is 
\[
-65536b\left(27a^{2}b-9ab+b+1\right)^{3}\left(ab^{2}+b+1\right)<0
\]
and thus $\Delta(x)$ has a unique real zero denoted by $x'$. Since
\begin{align}
\Delta(0) & =4a(b+1)^{3}>0,\label{eq:hzero}\\
\Delta(1) & =-4(27a-b-9)\left(ab^{2}+b+1\right)<0,\label{eq:hone}
\end{align}
we have $0<x'<1$. By the definition of $x'$, the polynomial $f^{*}(\zeta,\cos^{-1}\sqrt{x'})$
has a multiple zero which is denoted by $\zeta'$. 

We will show later that $|\zeta'|<1$. Assuming this inequality, we
choose $\sqrt{x'}<\cos\theta^{*}\ll1$. From \eqref{eq:hzero} and
\eqref{eq:hone}, we conclude that the discriminant $f^{*}(\zeta,\theta^{*})$
is negative. Since $\zeta'$ is a multiple zero of $f^{*}(\zeta,\cos^{-1}\sqrt{x'})$
and $f^{*}(\zeta,\theta^{*})$ has only one real zero, the inequality
$|\zeta'|<1$ implies that for $\cos\theta^{*}$ sufficiently close
to $\sqrt{x'}$, $f^{*}(\zeta,\theta^{*})$ has a non-real zero inside
the open unit disk.

For the reminder of this case, we prove $|\zeta'|<1$. We note that
$\zeta'$ is the zero of the remainder of the polynomial division
of the cubic polynomial $f^{*}(\zeta,\cos^{-1}\sqrt{x'})$ and its
derivative. Since this remainder is linear in $\zeta$, we can easily
solve for $\zeta'$ from a computer
\begin{equation}
\zeta'=-\frac{\sqrt{x'}\left(-3a\left(b+8x'-2\right)+b+4x'+1\right)}{3a\left(b-4x'+1\right)+4x'}.\label{eq:zetazero}
\end{equation}
Next, the rational function 
\[
r(x):=\frac{1+6a+b-3ab+(4-24a)x}{3a+3ab+(4-12a)x}
\]
is decreasing because its derivative 
\[
\frac{dr}{dx}=-\frac{4\left(27a^{2}b-9ab+b+1\right)}{(3ab-12ax+3a+4x)^{2}}<0
\]
since 
\[
1+b-9ab+27a^{2}b>1+b+ab^{2}>0
\]
where we apply the inequality $27a>b+9$ to the first expression.
We note that the inequality above also implies that the numerator
and the denominator of \eqref{eq:zetazero} cannot be both zero since
\[
\frac{1+6a+b-3ab}{24a-4}\ne\frac{3a+3ab}{12a-4}.
\]
We also have
\begin{align*}
r\left(\frac{9a+b+1}{4(9a-2)}\right) & =-1,\\
r\left(\frac{-6ab+3a+b+1}{12a}\right) & =1.
\end{align*}
Next, we show that $r(x)$ is continuous on 
\begin{equation}
\left(\frac{-6ab+3a+b+1}{12a},\frac{9a+b+1}{4(9a-2)}\right)\label{eq:critint}
\end{equation}
by showing that the vertical asymptote of $r(x)$ is outside this
interval. Indeed, we have 
\begin{equation}
\frac{3a+3ab}{12a-4}>\frac{9a+b+1}{4(9a-2)}>\frac{-6ab+3a+b+1}{12a}\label{eq:asymprxloc}
\end{equation}
since the difference of the first two terms and the last two terms
are 
\[
\frac{27a^{2}b-9ab+b+1}{4(3a-1)(9a-2)}>0
\]
and 
\[
\frac{27a^{2}b-9ab+b+1}{12a(3a-1)}>0
\]
respectively. As a consequence $|r(x)|<1$ for all $x$ in \eqref{eq:critint}.
From \eqref{eq:zetazero}, if $x'$ is in this interval, then $|\zeta'|<1$.

On the other hand, if $x'$ does not belong to this interval, then
the inequalities $\Delta(0)>0$ and

\[
\Delta\left(\frac{9a+b+1}{4(9a-2)}\right)=-\frac{(27a-b-9)\left(27a^{2}b-9ab+b+1\right)^{2}}{(9a-2)^{3}}<0,
\]
and the Intermediate Value Theorem imply that 
\begin{equation}
0<x'<\frac{-6ab+3a+b+1}{12a}<\frac{9a+b+1}{4(9a-2)}\label{eq:xzeroupperbound}
\end{equation}
and
\[
\Delta\left(\frac{-6ab+3a+b+1}{12a}\right)=\frac{(2ab-a+b+1)\left(27a^{2}b-9ab+b+1\right)^{2}}{27a^{3}}<0.
\]
We note that the first inequality implies $r(x')>1$ and the second
inequality implies $b<1/2$. From \eqref{eq:zetazero} and \eqref{eq:xzeroupperbound},
to prove $|\zeta'|<1$, it suffices to show 
\[
r(x')<2\sqrt{\frac{9a-2}{9a+b+1}}.
\]
By the monotonicity and continuity of $r(x)$ given in \eqref{eq:asymprxloc},
this inequality is equivalent to 
\[
x'>r^{-1}\left(2\sqrt{\frac{9a-2}{9a+b+1}}\right)
\]
where, with a computer algebra, the right side is 
\[
-\frac{-3ab+(-6ab-6a)\sqrt{\frac{9a-2}{9a+b+1}}+6a+b+1}{4(6a-2)\sqrt{\frac{9a-2}{9a+b+1}}+4(1-6a)}.
\]
By the Intermediate Value Theorem applied to $\Delta(x)$, it remains
to prove at the value $x$ above, $\Delta(x)>0$. By a computer algebra,
such value of $\Delta(x)$ is 
\[
-\frac{4\left(27a^{2}b-9ab+b+1\right)^{2}\left(\left(18a^{2}b+72a^{2}+2ab-16a-2b-2\right)\left(\sqrt{\frac{9a-2}{9a+b+1}}-1\right)-3+12a-3b+16ab+ab^{2}\right)}{(9a+b+1)\left((6a-2)\sqrt{\frac{9a-2}{9a+b+1}}-6a+1\right)^{3}}.
\]
The denominator of the expression above is negative since 
\[
(6a-2)\sqrt{\frac{9a-2}{9a+b+1}}-6a+1<(6a-2)-6a+1<0.
\]
To show 
\[
\left(18a^{2}b+72a^{2}+2ab-16a-2b-2\right)\left(\sqrt{\frac{9a-2}{9a+b+1}}-1\right)-3+12a-3b+16ab+ab^{2}>0
\]
we need to show 
\[
\left(1-\frac{-3+12a-3b+16ab+ab^{2}}{18a^{2}b+72a^{2}+2ab-16a-2b-2}\right)^{2}-\frac{9a-2}{9a+b+1}<0.
\]
Note that the left side is 
\[
-\frac{(27a-b-9)\left(108a^{3}b^{2}+432a^{3}b+a^{2}b^{4}+20a^{2}b^{3}+64a^{2}b^{2}-36a^{2}b-2ab^{3}-14ab^{2}-4ab+8a+b^{2}+2b+1\right)}{4(9a+b+1)\left(9a^{2}b+36a^{2}+ab-8a-b-1\right)^{2}}.
\]
We apply the inequalities $a\ge1/3$ and $0\le b<1/2$ to conclude
that the four differences $432a^{3}b-36a^{2}b$, $20a^{2}b^{3}-2ab^{3}$,
$64a^{2}b^{2}-14ab^{2}$, and $8a-4ab$ are nongegative. Thus 
\[
108a^{3}b^{2}+432a^{3}b+a^{2}b^{4}+20a^{2}b^{3}+64a^{2}b^{2}-36a^{2}b-2ab^{3}-14ab^{2}-4ab+8a+b^{2}+2b+1>0
\]
and we complete this case. 

We end the paper by proposing the following problem.
\begin{problem}
Characterize all linear polynomials $A(z)$, $B(z)$, and $C(z)$
such that the zeros of $P_{m}(z)$ defined in \eqref{eq:fourtermrecurrence}
are real for all $m$. 
\end{problem}

\end{document}